\numberwithin{equation}{section}
\def\al{\alpha}
\newcommand{\ds}{\displaystyle}
\def\cB{{\mathcal B}}
\def\cC{{\mathcal C}}
\def\v{{\rm v}}
\def\N {\mathbf{N}}
\def\T {\mathbb{T}}
\def\Z {\mathbf{Z}}
\def\R {\mathbf{R}}
\def\cB {\mathcal{B}}
\def\cC {\mathcal{C}}
\def\eps {{\varepsilon}}
\def\e {{\varepsilon}}
\newcommand{\ba}{\begin{aligned}}
\newcommand{\ea}{\end{aligned}}
\newcommand{\be}{\begin{equation}}
\newcommand{\ee}{\end{equation}}
\def\virgp{\raise 2pt\hbox{,}}
\newcommand{\beq}{\begin{equation}}
\newcommand{\eeq}{\end{equation}}
\newcommand{\ben}{\begin{eqnarray}}
\newcommand{\een}{\end{eqnarray}}
\newcommand{\beno}{\begin{eqnarray*}}
\newcommand{\eeno}{\end{eqnarray*}}
\def\sumetage#1#2{
\sum_{\scriptstyle {#1}\atop\scriptstyle {#2}} }
\def\supetage#1#2{\sup_{\substack{{#1}\\{#2}}}}
\def\PP{{\mathop{\mathbb P\kern 0pt}\nolimits}}
\def\Q{{\mathop{\mathbb Q\kern 0pt}\nolimits}}
\def\R{{\mathop{\mathbb R\kern 0pt}\nolimits}}
\def\SS{{\mathop{\mathbb S\kern 0pt}\nolimits}}
\def\ZZ{{\mathop{\mathbb Z\kern 0pt}\nolimits}}
\def\TT{{\mathop{\mathbb T\kern 0pt}\nolimits}}
\def\BB{{\mathop{\mathbb B\kern 0pt}\nolimits}}
\def\PP{{\mathop{\mathbb P\kern 0pt}\nolimits}}
\newtheorem{thm}{Theorem}
\newtheorem{defi}{Definition}[section]
\newtheorem{lem}[defi]{Lemma}
\newtheorem{rmk}[defi]{Remark}
\newtheorem{prop}[defi]{Proposition}
\def\R {\mathbb{R}}
\def\N {\mathbb{N}}
\def\Z {\mathbb{Z}}
\def \and {\quad\hbox{and}\quad}
\newcommand{\h}{{\rm h}}
\numberwithin{equation}{section}
\begin{document}%%%%%%%%%%%%%%%%%%%%%%%%%%%%%%%%
%%%%%%%%%%%%%%%%%%%%%%%%%%%%%%%%%%%%%%%%%%%%%%%%

%\frontmatter

  \title[The three-dimensional Navier-Stokes equations without vertical viscosity ]
  {Large, global solutions to the three-dimensional the Navier-Stokes equations without vertical viscosity}

\author[I. Gallagher]{Isabelle Gallagher}
\address[I. Gallagher]%
{DMA, \'Ecole normale sup\'erieure, CNRS, PSL Research University\\
45 rue d'Ulm, 75005 Paris, 
and Universit\'e de Paris, 75006 Paris, France}
\author[A. Yotopoulos]{Alexandre Yotopoulos}
\address[A. Yotopoulos]%
{Universit\'e de Paris and Sorbonne Universit\'e, CNRS, IMJ-PRG,   75013 Paris, France}

%\subjclass[2010]{}
 \keywords{Navier-Stokes equations   ; anisotropy;  Besov spaces}
\begin{abstract}
The three-dimensional, homogeneous, incompressible Navier-Stokes equations are studied in the absence of viscosity in one direction. It is shown that there are arbitrarily large initial data generating a unique global solution, the main feature of which is that they are slowly varying in the direction where viscosity is missing. The difficulty arises from the complete absence of a regularising effect in this direction. The special structure of the nonlinear term, joint with the divergence-free condition on the velocity field, is crucial in obtaining the result.
  \end{abstract}

 \maketitle
 
%\tableofcontents

 %%%%%%%%%%%%%%%%%%%%%%%%%%%%%%%%%%%%%%%%%%%%%%%%%%%%%%%%
 
%\mainmatter

\section{Introduction  and énoncé du résultat}   \label{first}
%%%

The incompressible homogeneous Navier-Stokes equations in $d$ space dimensions are written as
$$
{\rm(NS)} \quad  \left\{
\begin{array}{l}
\partial_{t} u + u\cdot \nabla u - \Delta u= -\nabla p  \\
\mbox{div} \, u= 0\\
u_{|t=0} = u_{0}\, ,
\end{array}
\right.
$$
where~$p=p(t,x)$ and~$u=(u^1,\dots,u^d)(t,x)$ are respectively the pressure and the velocity field of a viscous incompressible fluid. The viscosity has been set to~1 for simplicity.
 We are interested here in the case where the viscosity of the fluid is strongly anisotropic: the Laplacian acts only in the horizontal coordinates. By defining$$
\Delta_\h := \partial_1^2 + \partial_2^2\, ,
$$
 the system of equations writes
$$
{\rm(NS)}_\h \quad  \left\{
\begin{array}{l}
\partial_{t} u + u\cdot \nabla u -\Delta_\h u= -\nabla p  \quad \mbox{in} \quad
\R^+ \times \Omega\\
\mbox{div} \, u= 0\\
u_{|t=0} = u_{0}\, .
\end{array}
\right.
$$
 The spatial domain~$\Omega$ will be in the following~$\TT^2 \times \R$ or~$\TT^3$, where~$\TT^d:= (\R_{/\Z})^d $ is the~$d$-dimensional torus.  The assumption of zero vertical viscosity originates from the study of geophysical fluids, notably the oceans where the viscosity, known as ``turbulent'', is often much weaker in the vertical variable~\cite{cdgg}.   
 
 \medskip

Before presenting the results obtained in this article concerning~${\rm(NS)}_\h $, let us recall some known results on these equations
starting with the case of~(NS), where the
  viscosity is isotropic (we refer for example to~\cite{bcd,lemarie,lemarie2} for details and more references). In the case where the Laplacian acts in the three directions of space it is well known since the work of J. Leray~\cite{leray} that for any initial data~$u_0$ in the space~$L^2(\Omega)$ there exists a global distributional solution~$u$ to~(NS), of finite energy in the sense that
$$
\forall t \geq 0 \,, \quad \frac12 \|u(t)\|_{L^2(\Omega)}^2 + \int_0^t \|\nabla u(t')\|_{L^2(\Omega)}^2 \, dt'\leq \frac12 \|u_0\|_{L^2(\Omega)}^2, .
$$
 Let us recall that in dimension two space J. Leray shows in~\cite{leray2D} the uniqueness of this finite energy solution (and it verifies the energy equality). Uniqueness in the three dimensional case is an open problem, although recent results (see~\cite{gs} for numerical evidence, \cite{bv} for  distributional solutions and~\cite{abc}   in the presence of forcing) tend to indicate that the Leray solution to (NS) may well be non unique.

In the anisotropic context of~${\rm(NS)_h}$, on the other hand, the absence of compactness in the vertical direction makes the proof of~\cite{leray} inoperative and the global existence of weak solutions is not known. 

\bigskip
 
Concerning the existence of unique solutions in the isotropic framework, the Cauchy  theory is, as often for evolution PDEs, related to the scale invariance of the equation: for all~$\lambda >0$,
  if~$u$ is a solution of~(NS) associated to the data~$u_0$
  then~$\lambda u(\lambda^2 t, \lambda x )$ is a solution of~(NS) associated to the data~$\lambda u_0(\lambda x )$.  This scale invariance remains for~${\rm(NS)_h}$. 
Let us give some examples of scale invariant spaces for the initial data: first we recall the definition of   homogeneous Sobolev spaces~$H^s(\R^d)$, given by the norm (for~$s<d/2$)
$$
\|f\|_{H^s(\R^d)}:=\Big(\int_{\R^d} |\widehat f(\xi)|^2 \, |\xi|^{2s} \, d\xi
\Big)^\frac 12
$$
where~$\widehat f$ is the Fourier transform of~$f$.  In the case of periodic or hybrid boundary conditions  considered in this article, the definition becomes
$$
\|f\|_{H^s(\T^3)}:=\Big(\sum_{n \in \N^3} |\widehat f(n)|^2 \, |n|^{2s} 
\Big)^\frac 12 \quad \mbox{and}\quad\|f\|_{H^s(\T^2\times \R)}:=\Big(\sum_{n \in \N^2} \int_\R |\widehat f(n,\xi)|^2 \, (|n|^2 + |\xi|^2)^{s}  \, d\xi
\Big)^\frac 12\, .
$$
The spaces~$ H^\frac12(\Omega)$ and~$L^3(\Omega)$ are scale invariant. 
  The existence of unique solutions to~(NS), in short time (globally in time under a smallness condition on the initial data), is known for an initial data in the space~$  H^\frac12$ since H. Fujita and T. Kato~\cite{fk}, in~$L^3$ since~\cite{gigamiyakawa,kato,weissler}. In the framework of Besov spaces~$B^{-1+\frac3p}_{p,\infty}$ for~$p<\infty$ (see Definition~\ref{debesoviso} below) we know that a global solution exists for small data since~\cite{Planchon}. The best result 
  in this context is due to H. Koch and D. Tataru~\cite{kochtataru}: these authors prove by a fixed point argument (as it is the case for all the uniqueness results mentioned above)
  that   (NS) is globally wellposed under a smallness condition on
   $$
 \|u_0\|_{\rm{BMO}^{-1} }:= \sup_{t >0} t^\frac12 \|e^{t\Delta} u_0\|_{ L^\infty }  
 + \supetage{x\in \R^3}{R>0} \frac 1 {R^\frac 32}\Big( \int_{[0,R^2] \times B(x,R)} |(e^{t\Delta} u_0)(t,y)|^2 \, dydt\Big)^\frac12 \, ,
  $$    
  where~$B(x,R)$ is the ball centred at~$x$ with radius~$R$.
  The space~$\text{BMO}^{-1}$, like the other spaces mentioned above, 
  is invariant by the change of scale of the equation. Note that the norm~$\ds \sup_{t >0} t^\frac12 \|e^{t\Delta} u_0\|_{ L^\infty }$ which appears above is equivalent to the Besov norm~$  \|u_0\|_{B^{-1 }_{\infty, \infty} }$ (see~(\ref{equivnorm}) for the equivalence).  This space~$B^{-1 }_{\infty, \infty} $ 
  is in fact the space into which any Banach space of scale-invariant tempered distributions is embedded, see~\cite{meyer} --- if we want to define a notion of  ``large'' initial data for (NS), it is thus in~$B^{-1 }_{\infty, \infty} $ that it should be measured; we refer the reader to the Appendix for more information on these spaces, whose definition we recall below -- as well as that of the anisotropic Besov spaces which are used in this article and which are modelled on them.  We note~$\xi = (\xi_\h,\xi_3) = (\xi_1,\xi_2,\xi_3)$ with~$\xi_\h\in \Z^2$ and~$\xi_3 \in \R$ the Fourier variables on~$\T^2 \times \R$ (and~$\xi_3 \in \ZZ$ if~$\Omega = \T^3$). 
  \begin{defi}[Isotropic Besov spaces]\label{debesoviso}
 Let~$\chi$ be a radial function in~$\mathcal{D}(\mathbb{R})$ such that~$\chi(t) = 1$ for~$|t| \leq 1$
and~$\chi(t) = 0$ for~$|t|>2$.  For all~$q \in \ZZ $ we define the frequency truncation operators    
$$
\begin{aligned}
\widehat {S_q f} (\xi)
:= \chi \bigl(2^{-q}|\xi|\bigr) \widehat f(\xi)
\quad \mbox{and} \quad \Delta_q :=
 S_{q+1} - S_{q} \, .
 \end{aligned}$$
For any~$p,r$ in~$ [1,\infty]$ and any~$s$ in~$ \R, $ with~$s < 3/p$ (or~$s \leq 3/p$ if~$r=1$), the homogeneous Besov space~$B^{s}_{p,r}$ is the space of tempered distributions~$f$ such that    $$
    \|f\|_{B^{s} _{p,r}}:= \Big\| 2^{qs}\| \Delta_{q}  f\|_{L^p} \Big\|_{\ell^r}< \infty \, .
    $$
 \end{defi}
The Sobolev space~$  H^s$ corresponds to the choice~$p=r=2$. It is well-known (see~\cite{bcd} for instance) that the Besov norm has an equivalent formulation via the heat flow
\beq\label{equivnorm}
\forall s<0 \, , \quad f \in     \|f\|_{B^{s} _{p,r} } \Longleftrightarrow  \Big\|t^{-\frac s2} \|e^{t\Delta}f\|_{L^p} \Big\|_{L^r (\frac {dt}t)}< \infty \, .
\eeq
\smallskip

The anisotropic Besov spaces used in this text are of two types, defined below.
   \begin{defi}[Anisotropic Besov spaces]\label{deflpanisointro}
 With the notations of Definition~{\rm\ref{debesovis}}, the horizontal frequency truncation operators are defined for $j\in \ZZ $ by
$$
\begin{aligned}
\widehat {S_j^{\rm h} f}  (\xi)
:= \chi \bigl(2^{-j}|\xi_\h|\bigr) \widehat f(\xi)
  \and  \Delta_j^{\rm h} :=
 S_{j+1}^{\rm h} - S_{j}^{\rm h} \, ,
 \end{aligned}$$
and the vertical frequency truncations for $q\in \ZZ$ by
$$
\begin{aligned}
\widehat {S_q^{\rm v} f}  (\xi)
  := \chi (2^{-q}|\xi_3|) \widehat f(\xi)
 \and  \Delta_q^{\rm v} :=
 S_{q +1}^{\rm v} - S_{q}^{\rm v}
  \, .
 \end{aligned}$$
 For all~$s \leq 2/p$ and~$ s' \leq 1/p$, the Besov     space ~${\mathcal  B}^{s,s'}$ is the space of tempered distributions~$f$ such that
    $$
    \|f\|_{{\mathcal B}^{s,s'} }:=\sum_{j,q} 2^{js + qs'}\|\Delta_j^{\rm h} \Delta_{q}^{\rm v} f\|_{L^2} < \infty \, ,
    $$
    and the Besov space~${ B}^{0,s } $ is the space of tempered distributions~$f$ such that
    $$
    \|f\|_{{ B}^{0,s} }:= \sum_{ q} 2^{ qs }\| \Delta_{q}^{\rm v} f\|_{L^2} < \infty \, .
    $$
      \end{defi}
Let us notice that ${\mathcal B}^{0,s }$ is continuously embedded in ~${ B}^{0,s } $, since
  \beq\label{injcont} 
  \sum_{ q}   2^{ qs } \|  \Delta_{q}^{\rm v}  f\|_{L^2} \leq   \sum_{ j,q}   2^{ qs } \|  \Delta_j^{\rm h}  \Delta_{q}^{\rm v}  f\|_{L^2}\, .
  \eeq
%Notons par ailleurs que si~$f$ est de moyenne horizontale nulle, alors
%\begin{equation}\label{moyennenulle}
%  s_1 \geq s_2 \Longrightarrow  \|f\|_{{\mathcal B}^{s_1,s'} }\leq \|f\|_{{\mathcal B}^{s_2,s'} }\, .
%\end{equation}
%  
%
 Note that the anisotropic Besov space~$B^{0,\frac12}$ appears naturally here because it is modelled on the space~$L^2(\T^2; \dot H^\frac12(\R))$ -- which is a natural space  in the context of~${\rm(NS)_h}$ since~$L^2(\T^2)$ is associated with the two-dimensional equation and~$\dot H^\frac12$ is scale-invariant in dimension three -- while being a Banach space (unlike~$\dot H^\frac12(\R)$ for example, and thus~$L^2(\T^2; \dot H^\frac12(\R))$).    
This space was introduced in this context by M. Paicu in~\cite{paicu}, who showed the global existence in time of solutions to~${\rm(NS)_h}$ for small data (local in time for any data) in~$B^{0,\frac12}$. He also showed the uniqueness of solutions in~$L^\infty(\R^+,B^{0,\frac12})$ whose horizontal gradient is in~$ L^2(\R^+,B^{0,\frac12})$. 
 
\medskip

In the present work we seek, in the spirit of the works~\cite{cg3,cgm,cgz} for example, to exhibit  initial data which may be arbitrarily large but for which
there is a unique associate global solution to~${\rm(NS)_h}$.
 The natural context, following these works, is to consider initial data varying slowly in one direction, and the specificity of this paper is to assume that this direction is the same as the one in which there is no viscosity (the vertical one for instance).
 This work thus follows a series of works concerning either the slowly varying case or the anisotropic equation (see for example~\cite{bcg,bg,cdgg2,cg3,cgm,cgz,cz,cz2,gz,Iftimie,paicu,paicuzhang}).  To our knowledge, this is the first time that the slowly variable character in one direction, which allows to obtain global solutions without any smallness assumption on the initial data, is mixed with the absence of vertical viscosity in the equation. This leads to be particularly careful in the estimates since no regularising effect is possible in the vertical direction. 
 In particular the special structure of the nonlinear term, joint with the condition that the velocity field is divergence free, will be crucial in the analysis.
 The result is as follows.

 \begin{thm}\label{thmyotopoulos}
  Let~$ u^\h_{0} = ( u^1_{0} , u^2_{0} ) $ and~$w_0= ( w^1_{0} ,w^2_{0} , w^3_{0} ) $ be two divergence free vector fields  with~$( u^\h_{0} , w^3_{0} )$ belonging to~${\mathcal B}^{0,\frac12} \cap{\mathcal B}^{-1,\frac52}$.
Let, for all~$\e\in (0,1)$,
 \begin{equation}\label{initialdata}
u^\e_{0}(x) := (u_0^\h +\varepsilon w_0^\h,w_0^3) (x_1,x_2,\varepsilon x_3) \, .
\end{equation}
For~$\e$ small enough, there is a unique global solution~$u^\e$ to~${\rm(NS)_h}$ associated to the initial data~{\rm(\ref{initialdata})}, in the space~$L^\infty(\R^+,B^{0,\frac12})$ and such that~$\nabla^\h u^\e$ belongs to~$ L^2(\R^+,B^{0,\frac12})$.
 \end{thm}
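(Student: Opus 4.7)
\medskip

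\textbf{Strategy.} The plan is to follow the slowly-varying initial data approach of~\cite{cg3,cgm,cgz}: construct an approximate solution of the form $u^\e_{\rm app}(t,x) = U(t,x_\h,\e x_3) + \e V(t,x_\h,\e x_3)$ from a profile depending on the slow vertical variable $y_3 = \e x_3$, and seek the exact solution as $u^\e = u^\e_{\rm app} + R^\e$, with the remainder $R^\e$ proved to stay globally small for $\e$ small enough. Requiring the order-$1$ terms to cancel in~$({\rm NS})_\h$ leads, for each frozen value of $y_3$, to the two-dimensional Navier-Stokes system in the horizontal plane
\begin{equation*}
\partial_t U^\h + U^\h\cdot\nabla_\h U^\h - \Delta_\h U^\h + \nabla_\h P = 0,\quad \Div_\h U^\h = 0,\quad U^\h\vert_{t=0} = u_0^\h,
\end{equation*}
together with the horizontal transport-diffusion equation $\partial_t U^3 + U^\h\cdot\nabla_\h U^3 - \Delta_\h U^3 = 0$ with $U^3\vert_{t=0} = w_0^3$. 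The first-order corrector~$V$ is then chosen so as to restore the divergence-free condition to higher order in~$\e$ and to absorb the contribution of the $\e w_0^\h$ piece of the initial data.

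\medskip

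\textbf{Global control of the profile.} The profile system is a parametric family of two-dimensional Navier-Stokes equations coupled to a passively-transported scalar, and is therefore globally wellposed for each fixed $y_3$. Applying the classical two-dimensional theory slice by slice, and propagating vertical $(y_3)$ derivatives thanks to the hypothesis $(u_0^\h,w_0^3) \in \cB^{0,\frac12} \cap \cB^{-1,\frac52}$, one obtains global-in-time bounds on $U$ and $V$ in the anisotropic norms needed in the sequel, independently of~$\e$. This step is technical but essentially classical, as the equations satisfied by $(U,V)$ contain only horizontal nonlinearities.

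\medskip

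\textbf{Equation for the remainder and bootstrap.} Plugging the ansatz into~$({\rm NS})_\h$ produces
\begin{equation*}
\partial_t R^\e - \Delta_\h R^\e + R^\e\cdot\nabla R^\e + u^\e_{\rm app}\cdot\nabla R^\e + R^\e\cdot\nabla u^\e_{\rm app} + \nabla \pi^\e = F^\e\, ,
\end{equation*}
where $F^\e$ collects the errors of the ansatz. By construction $F^\e$ is of size $O(\e)$ in $L^2(\R^+;B^{0,\frac12})$: the surviving terms all involve vertical derivatives of the slowly-varying profile, each producing a factor~$\e$, and these derivatives are controlled thanks to the~$\cB^{-1,\frac52}$ regularity. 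One then carries out a Paicu-type energy estimate for $R^\e$ in $L^\infty(\R^+;B^{0,\frac12}) \cap L^2(\R^+;B^{1,\frac12})$ and closes by a continuity/bootstrap argument: so long as $\|R^\e\|$ stays below a fixed threshold, the estimates yield an improved bound of the form $\|R^\e\| \lesssim \e$.

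\medskip

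\textbf{Main obstacle.} The decisive step is the bilinear estimate for the products in the remainder equation that involve a bare vertical derivative $\partial_3$: no such term can be absorbed by the horizontal dissipation. Following~\cite{paicu}, the key remedy is to use the divergence-free condition to write the vertical velocity as $u^{\e,3}(t,x_\h,x_3) = -\int_0^{x_3}\Div_\h u^{\e,\h}(t,x_\h,z)\,dz$, trading a bare $\partial_3$ against a horizontal derivative plus a vertical integration, and to combine this with Bony's paradifferential decomposition applied separately in the horizontal and vertical Littlewood-Paley frequencies. Carrying out this analysis on the coupled system for $R^\e$ while keeping quantitative track of the~$\e$-dependence through the profile is expected to be the main technical work.
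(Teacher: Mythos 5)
Your proposal follows essentially the same route as the paper: the ansatz $u^\e_{\rm app}=[u^\h+\e w^\h,w^3]_\e$ with $u^\h$ solving the two-dimensional Navier--Stokes equations slice by slice in $y_3$ and $w^3$ a transported-diffused scalar, an $O(\e)$ error term, a Paicu-type energy estimate for the remainder closed by a bootstrap on a finite partition of $\R^+$ in time, and crucially the use of the divergence-free condition ($\partial_3 R^{\e,3}=-\mathrm{div}_\h R^{\e,\h}$) together with Bony's vertical paraproduct to convert the bare $\partial_3$ into horizontal derivatives. The outline is correct and identifies the right key difficulty; the remaining work is the quantitative implementation already indicated.
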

 
  \begin{rmk}
It is shown in~\cite{cg3} that a function of the form
 $$
 h^\e(x) = f(x_\h) g(\e x_3)
 $$
 with~$f$ and~$g$ in the Schwartz class, verifies
 $$
\|h^\e\|_{ B^{-1}_{\infty,\infty} } \geq \frac 1 4 \|f\|_{\dot B^{-1}_{\infty,\infty} } 
\|g\|_{L^\infty} 
$$
so can be as large as desired in~$ B^{-1}_{\infty,\infty}$.
  \end{rmk}

\begin{rmk}
The periodic character of the horizontal variable allows us to obtain good regularity estimates    on~$w^\h$ from estimates on~$w^3$ via the identity
$$
w^\h = - \nabla^\h (\Delta_\h)^{-1} \partial_3 w^3 \, , \quad \nabla^\h := (\partial_1,\partial_2) \, ,
$$
because~$\partial_3 w^3$ has a zero horizontal mean (see Section~{\rm\ref{demforceext}}). 
In the case where~$w_0 \equiv 0$, the proof of Theorem~{\rm\ref{thmyotopoulos}} shows that we can assume indifferently that the horizontal variable is in~$\R^2$ or in~$\T^2$.
    \end{rmk} 
   From now on we note
$$
 \big[f \big]_\e (x) := f(x_{\rm h},\e x_3) \, .
$$
The method of proving the Theorem~\ref{thmyotopoulos}
consists in looking for the solution~$u^\e$, and the associated pressure~$ p^\e$, in the form
 \begin{equation}\label{defapp}
u^\e = u^\e_{\rm app}+R^\eps \, , \quad p^\e = p^\e_{\rm app}+q^\eps
\end{equation}
with
$$
u^\e_{\rm app} =\big[u^\h +\varepsilon w^\h,w^3\big]_\e \, , \quad p^\e_{\rm app} =\big[ p_0+ \e p_1\big]_\e$$  
%$$
%u^\e_{\rm app} = (u^\h ,0 \big) (t,x_\h,\varepsilon x_3)
% $$
where for all~$y_3$,~$u^\h (\cdot, y_3) $ 
is a solution of the two-dimensional Navier-Stokes equations with initial data~$u_0^\h (\cdot, y_3)$:
$$
{\rm(NS2D)}_{y_3} \quad \left\{
\begin{array}{l}
\partial_{t} u^\h+ u^\h \cdot \nabla^\h u^\h -\Delta_\h u^\h= -\nabla^\h p_0\quad \mbox{in} \quad
\R^+ \times \T^2\\
\mbox{div}_\h u^\h = 0\\\
u^\h _{|t=0} = u^\h_{0} (\cdot,y_3)\, ,
\end{array}
\right.
$$
and~$ w$ is a solution of the linear equation
$$
{\rm(T)}  \quad \left\{
\begin{array}{l}
\partial_{t} w + u^\h\cdot \nabla^\h w -\Delta_\h w = -( \nabla^\h p_1 , 0)\quad \mbox{in} \quad
\R^+ \times \Omega \\
\mbox{div}  \, w= 0\\
w_{|t=0} = w_{0} \, .
\end{array}
\right.
$$%Notons que la     moyenne horizontale   est préservée par ces équations. 

\medskip

Since~$ u^\h_{0}  (\cdot,y_3)$ belongs to~$ L^2(\T^2)$, then given~$y_3$ there is a unique global solution~$u^\h$  to ${\rm(NS2D)}_{y_3}$,  in the energy space~$L^\infty(\R^+;L^2(\T^2) )\cap L^2(\R^+;\dot H^1(\T^2))$.
The vector field~$w$ also exists uniquely for all times (this will be clarified in Section~\ref{demforceext}), 
and the main part of the work consists therefore in solving globally in time, for~$\e$ sufficiently small, the (perturbed anisotropic Navier-Stokes) equation verified by~$R^\e$.

\medskip

The plan of the article is as follows. In Section~\ref{find the proof} we show that for~$\e$ sufficiently small, the equation verified by~$R^\e$ has a global solution: this proof relies on a priori estimates on the approximate solution~$ u^\e_{\rm app}$ which are derived in Section~\ref{demforceext} from estimates on~${\rm(NS2D)}_{y_3} $
 and on~${\rm(T)}$. The Appendix is devoted to the recollection of classical results concerning the functional spaces appearing in the present work, as well as two important trilinear estimates which can be found in the literature.

\medskip

Throughout this article and unless otherwise stated, we will note by~$C$ a universal constant, in particular independent of~$\e$, which can change from one line to another. We will sometimes note~$a \lesssim b$ if~$a \leq Cb$. We will note~$q\sim q'$ if~$q \in [q'-C,q'+C]$.  
We will note generically by~$(s_q)_{q\in \Z}$ a sequence of positive real numbers such that~$\displaystyle \sum_{q \in \ZZ} s_q^\frac12 \leq 1$ and by~$(d_q)_{q\in \Z}$ a sequence of positive real numbers such that~$\displaystyle \sum_{q \in \ZZ} d_q \leq 1$.
Finally, if~$X$ is a function space on~$\T^2$ and~$Y$ a function space on~$\R$, we write $X_\h Y_\v:=X(\T^2;Y(\R))$   and similarly~$ Y_\v X_\h:=Y(\R;X(\T^2))$.

 \section{Proof of the theorem}{\label{find the proof}}
  \subsection{Main steps of the proof}
 Recalling the notation~(\ref{defapp}), let us write the system of equations verified by~$R^\e$. We have
  \beq
 \label{eqReps}
 \partial_{t} R^\e + R^\e \cdot \nabla R^\e+u^\e_{\rm app} \cdot \nabla R^\e+R^\e\cdot \nabla u^\e_{\rm app} -\Delta_\h R^\e= - \nabla q_\e +\e F^\e
 \eeq
 with
 $$ F^\e :=  \e {\color{black}\Bigl[}\Bigl(w^{\rm h}  \cdot \nabla^{\rm h} (w^{\rm h} ,0 )+  w^3 \partial_3( w^{\rm h},0)\Bigr){\color{black}\Bigr]_\e} +   {\color{black}\Bigl[}\Bigl(w \cdot\nabla  (u^\h,w^3) \Bigr){\color{black}\Bigr]_\e}  +   \big (0,[\partial_3 (p_0 +  \e p_1)]_\e\big)    \, ,
$$
and $ R^\e_{|t=0} = 0$.

\medskip

To prove the global existence of~$R^\e$ we   write an a priori estimate on~$ \| R^\e\| _{\tilde L^\infty(\R^+;B^{0,\frac12})}$ and~$ \|\nabla^\h R^\e\|_{\tilde L^2(\R^+; B^{0,\frac12})}$ (the definition of these spaces is recalled in the Appendix) and omit  the classical step of regularization of the system to justify the estimates. Moreover, as recalled in the introduction, only the global existence of solutions has to be proved since the uniqueness for~${\rm (NS)_h}$ in our functional framework is a consequence of~\cite{paicu}.

\medskip

In order to absorb the linear terms in~(\ref{eqReps}) we   use a Gronwall-type argument, but in the context of~$\tilde L^p$ spaces in time (see the Appendix for a definition of these spaces). One strategy (see~\cite{gip} for example) is to write a partition of~$\R^+$ into time intervals
\begin{equation}\label{partitiontemps}\R^+= \bigcup_{k=0}^{K-1} [t_k,t_{k+1}[
\end{equation}
as in   Proposition~\ref{propNS2D} below, which depends on a constant~$\bar C$ which will be fixed at the end. 
 We then write, following~\cite{paicu}, an energy estimate in~$L^2$ on~$\Delta_q^\v R^\e$ and it comes after integration on a time interval~$ {[t_k,t_{k+1}]}$ 
\begin{equation}\label{energyestimate}
\begin{aligned}
& \frac12  \| \Delta_q^\v R^\e(t_{k+1})\|_{L^2}^2  + \int_{t_k}^{t_{k+1}}    \|\nabla^\h \Delta_q^\v R^\e (t)\|_{L^2}^2\, dt\leq\frac12  \| \Delta_q^\v R^\e(t_k)\|_{L^2}^2   \\
& +   \int_{t_k}^{t_{k+1}}   \big | ( \Delta_q^\v (R^\e\cdot \nabla R^\e)  |
 \Delta_q^\v R^\e)_{L^2}(t)
\big | 
 \, dt \\
& + \int_{t_k}^{t_{k+1}} \Big( \big | ( \Delta_q^\v (u^\e_{\rm app}\cdot \nabla R^\e)  |
 \Delta_q^\v R^\e)_{L^2}(t)
 \big |+  \big | ( \Delta_q^\v ( R^\e\cdot \nabla u^\e_{\rm app}) |
 \Delta_q^\v R^\e)_{L^2}(t)
\big |  \Big) \, dt'\\
& +  \quad  \e \int_{t_k}^{t_{k+1}}\big |  (\Delta_q^\v F^\e  |
 \Delta_q^\v R^\e)_{L^2}
(t)
\big |  \, dt\, .
\end{aligned}
\end{equation}
We note that~$ R^\e(t_0) = R^\e( 0) = 0$. Let us introduce the notation
$$
\tilde L^r_{k} X:= \tilde L^r([t_k,t_{k+1}]; X) \, .
$$
From~(\ref{estimRRappendix}) we know that
\begin{equation}\label{estimRR}
 \int_{t_k}^{t_{k+1}} \big | ( \Delta_q^\v (R^\e\cdot \nabla R^\e ) |
 \Delta_q^\v R^\e)_{L^2}(t)
\big | 
 dt\lesssim 2^{-q}s_q \|\nabla^\h R^\e\|_{\tilde L^2_{k} B^{0,\frac12}}^2 \| R^\e\|_{\tilde L^\infty_{k} B^{0,\frac12}} 
\end{equation}
and from~(\ref{estimRuappendix}) 
$$
\begin{aligned}
 & \int_{t_k}^{t_{k+1}}  \big | ( \Delta_q^\v (u^\e_{\rm app}\cdot \nabla R^\e ) |
 \Delta_q^\v R^\e)_{L^2}(t)
\big | 
 dt\lesssim   2^{-q}s_q \| R^\e\|^\frac12 _{\tilde L^\infty_{k}  B^{0,\frac12}}\|\nabla^\h R^\e\|_{\tilde L^2_{k}  B^{0,\frac12}} \\
 &\quad \times \Big(\|\nabla^\h R^\e\|^\frac12_{\tilde L^2_{k}  B^{0,\frac12}}
  \| u^\e_{\rm app}\|^\frac12 _{\tilde L^\infty_{k}  B^{0,\frac12}}\|\nabla^\h u^\e_{\rm app}\|^\frac12_{\tilde L^2_{k}  B^{0,\frac12}} 
   +\|\nabla^\h u^\e_{\rm app}\| _{\tilde L^2_{k}  B^{0,\frac12}} 
 \| R^\e\|^\frac12 _{\tilde L^\infty_{k}  B^{0,\frac12}}\Big)
\end{aligned}
$$
by recalling that~$(s_q)_{q \in \ZZ}$ denotes generically a sequence of positive real numbers verifying $$
\sum_{q \in \ZZ} s_q^\frac12 \leq 1 \, .
$$
Thanks to Young's inequality
 \begin{equation}\label{young}
ab \lesssim a^p + b^{p'} \, , \quad \frac1p+\frac1{p'} =1
 \end{equation}
this last inequality can also be written
\begin{equation}\label{estimRuapp}
\begin{aligned}
& \int_{t_k}^{t_{k+1}}   \big | ( \Delta_q^\v (u^\e_{\rm app}\cdot \nabla R^\e ) |
 \Delta_q^\v R^\e)_{L^2}
(t)\big | 
 dt\leq    2^{-q}s_q\Big( 
 \frac 1{4}\|\nabla^\h R^\e\|_{\tilde L^2_{k}  B^{0,\frac12}} ^2  \\
& \qquad\qquad\qquad \qquad+ C
  \| R^\e\|  _{\tilde L^\infty_{k}  B^{0,\frac12}} ^2 \|\nabla^\h u^\e_{\rm app}\| _{\tilde L^2_{k}  B^{0,\frac12}} ^2 (1+\|  u^\e_{\rm app}\|  _{\tilde L^\infty_{k}  B^{0,\frac12}}^2 )\Big)\, .
\end{aligned}
\end{equation}
The end of the proof of the theorem relies on the following two lemmas, which are proved respectively in Paragraph~\ref{demlemproduit}   and in Section~\ref{demforceext}.
\begin{lem}\label{demlemproduct}
There exists a constant~$C>0$ such that
under the hypotheses of   Theorem~{\rm{\ref{thmyotopoulos}}}, there exists a sequence~$(s_q)_{q \in \ZZ}$ of positive real numbers verifying~$\displaystyle \sum_{q \in \ZZ} s_q^\frac12 \leq 1$ and such that
\begin{equation}\label{estimuappR}
\begin{aligned}
 & \int_{t_k}^{t_{k+1}}   \big | ( \Delta_q^\v (R^\e\cdot \nabla u^\e_{\rm app} )  |
 \Delta_q^\v R^\e)_{L^2}(t)
\big | 
 dt \leq   2^{-q}s_q\Big( 
 \frac 1{4}\|\nabla^\h R^\e\|_{\tilde L^2_{k}  B^{0,\frac12}} ^2  \\
&\quad  + C
  \| R^\e\|  _{\tilde L^\infty_{k}  B^{0,\frac12}} ^2 \big (\| u^\e_{\rm app}\| _{\tilde L^2_{k}   {\mathcal B}^{1,\frac12}} ^2 (1+\|  u^\e_{\rm app}\|  _{\tilde L^\infty_{k}  B^{0,\frac12}}^2 )+  \| \partial_3 u^\e_{\rm app} \|_{\tilde L^1_{k}  {\mathcal B}^{1,\frac12}}\big ) \Big)
 \, .
\end{aligned}
\end{equation}
\end{lem}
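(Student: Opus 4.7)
The plan is to split the velocity gradient into its horizontal and vertical components,
\[
R^\e \cdot \nabla u^\e_{\rm app} = R^{\e,\h} \cdot \nabla^\h u^\e_{\rm app} + R^{\e,3}\, \partial_3 u^\e_{\rm app},
\]
and treat the two contributions separately. For the horizontal piece I would apply a vertical Bony decomposition
\[
R^{\e,\h} \cdot \nabla^\h u^\e_{\rm app} = T^\v_{R^{\e,\h}} \nabla^\h u^\e_{\rm app} + T^\v_{\nabla^\h u^\e_{\rm app}} R^{\e,\h} + R^\v\bigl(R^{\e,\h}, \nabla^\h u^\e_{\rm app}\bigr),
\]
and bound each paraproduct and remainder by the trilinear product lemma of the Appendix that was used to obtain~(\ref{estimRuappendix}). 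The structural difference with the estimate~(\ref{estimRuapp}) is that the single horizontal derivative now falls on $u^\e_{\rm app}$ rather than on~$R^\e$, so the norm of the approximate solution that appears is promoted from $\tilde L^2_{k} B^{0,\frac12}$ to $\tilde L^2_{k}\mathcal{B}^{1,\frac12}$. Applying Young's inequality~(\ref{young}) then yields a bound of the shape
\[
2^{-q}s_q \Bigl( \tfrac{1}{4}\|\nabla^\h R^\e\|_{\tilde L^2_{k} B^{0,\frac12}}^2 + C \|R^\e\|_{\tilde L^\infty_{k} B^{0,\frac12}}^2 \|u^\e_{\rm app}\|_{\tilde L^2_{k} \mathcal{B}^{1,\frac12}}^2 \bigl(1+\|u^\e_{\rm app}\|_{\tilde L^\infty_{k} B^{0,\frac12}}^2\bigr)\Bigr),
\]
which is the first block on the right-hand side of~(\ref{estimuappR}).

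The vertical piece $R^{\e,3}\, \partial_3 u^\e_{\rm app}$ is delicate, because the energy identity~(\ref{energyestimate}) provides only $\|\nabla^\h R^\e\|^2$ as a dissipation: since $\partial_3$ is not a smoothing direction, no part of the $\partial_3$ load can be transferred onto a factor of~$R^\e$ and then reabsorbed by the viscous term. I would again apply the vertical Bony decomposition to~$R^{\e,3}\, \partial_3 u^\e_{\rm app}$, but now combine Hölder's inequality with the vertical Bernstein inequalities recalled in the Appendix so that the two copies of $\Delta_q^\v R^\e$ each sit in $\tilde L^\infty_{k} L^2$, while the whole spatial regularity is placed on $\partial_3 u^\e_{\rm app}$ and integrated only in $L^1$ in time. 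The extra horizontal derivative required to recover the $\ell^1$-sum over the horizontal frequency index~$j$ is precisely what upgrades the relevant norm from $B^{0,\frac12}$ to $\mathcal{B}^{1,\frac12}$, producing the missing contribution
\[
2^{-q}s_q\, \|R^\e\|_{\tilde L^\infty_{k} B^{0,\frac12}}^2\, \|\partial_3 u^\e_{\rm app}\|_{\tilde L^1_{k}\mathcal{B}^{1,\frac12}}.
\]

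The main obstacle is exactly this last step. A natural reflex would be either to integrate by parts in $x_3$ to move $\partial_3$ off $u^\e_{\rm app}$, or to trade it against $\Div_\h R^{\e,\h}$ through the divergence-free relation $\partial_3 R^{\e,3} = -\Div_\h R^{\e,\h}$, thereby recycling it as horizontal dissipation on $R^\e$; unfortunately neither manipulation commutes cleanly with the vertical paraproduct localization in a way that yields a term absorbable by~$\tfrac{1}{4}\|\nabla^\h R^\e\|_{\tilde L^2_{k} B^{0,\frac12}}^2$, which is why the somewhat asymmetric $\tilde L^1_{k}\mathcal{B}^{1,\frac12}$ norm is forced upon us. Summing both pieces over $q$ with the convention $\sum_q s_q^{1/2}\leq 1$ then yields~(\ref{estimuappR}).
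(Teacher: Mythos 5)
Your route is viable but genuinely different from the paper's. The paper first uses $\Div R^\e=0$ to integrate by parts, rewriting $-\big(\Delta_q^\v(R^\e\cdot\nabla u^\e_{\rm app})\,|\,\Delta_q^\v R^\e\big)_{L^2}$ as $\sum_{\ell=1}^3\big(\Delta_q^\v(R^{\e,\ell}u^\e_{\rm app})\,|\,\partial_\ell\Delta_q^\v R^\e\big)_{L^2}$, so that no derivative ever falls on $u^\e_{\rm app}$; it then splits according to whether the derivative on the test function is horizontal (feeding the dissipation) or vertical (costing a harmless $2^q$ by Bernstein), and the norm $\|\partial_3 u^\e_{\rm app}\|_{\tilde L^1_{k}{\mathcal B}^{1,\frac12}}$ only appears a posteriori through the inverse Bernstein inequality~(\ref{inversebernstein}) applied to $\Delta_{q'}^\v u^\e_{\rm app}$. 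You instead split $\nabla u^\e_{\rm app}$ directly and let $\partial_3$ land on $u^\e_{\rm app}$, which also closes and is arguably more transparent for the vertical piece; it has the side benefit of not using the divergence-free condition at all in this lemma, whereas the paper uses it twice (once for the initial integration by parts, once in the term $T_{q3}^2$). Two caveats. First, your remark that trading $\partial_3 R^{\e,3}$ for $-\Div_\h R^{\e,\h}$ ``does not commute cleanly with the vertical paraproduct localization'' is incorrect: these operators are Fourier multipliers in separate variables, they commute exactly with $\Delta_{q'}^\v$, and the paper uses precisely this identity inside its Bony decomposition to send one contribution back into the horizontal dissipation. Second, the horizontal piece cannot be obtained by invoking~(\ref{estimRuappendix}) with the roles of $R^\e$ and $u^\e_{\rm app}$ exchanged: that estimate is tailored to the case where the differentiated field is the one tested against, and a blind swap produces after Young either a term in $\|R^\e\|_{\tilde L^\infty_{k}B^{0,\frac12}}^{2/3}$ or a source term with no factor of $R^\e$ at all, neither of which is admissible in the Gronwall scheme of Section~2. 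You must redo the Bony/H\"older analysis, for instance placing $S^\v_{q'-1}R^{\e,\h}$ and $\Delta_q^\v R^\e$ in $L^4_tL^4_\h$ (each worth $\|R^\e\|^{\frac12}_{\tilde L^\infty_{k}B^{0,\frac12}}\|\nabla^\h R^\e\|^{\frac12}_{\tilde L^2_{k}B^{0,\frac12}}$) and $\Delta_{q'}^\v\nabla^\h u^\e_{\rm app}$ in $L^2_tL^2$, which yields $2^{-q}s_q\|R^\e\|_{\tilde L^\infty_{k}B^{0,\frac12}}\|\nabla^\h R^\e\|_{\tilde L^2_{k}B^{0,\frac12}}\|u^\e_{\rm app}\|_{\tilde L^2_{k}{\mathcal B}^{1,\frac12}}$ and hence the first block after Young. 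With these repairs your sketch is correct.
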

\begin{lem}\label{lemdemforceext}
There exists a constant~$C>0$ such that
under the hypotheses of    Theorem~{\rm{\ref{thmyotopoulos}}}, there exists a sequence~$(s_q)_{q \in \ZZ}$ of positive real numbers  verifying~$\displaystyle \sum_{q \in \ZZ} s_q^\frac12 \leq 1$ and such that
\begin{equation}\label{estimF}
2^q \int_{t_k}^{t_{k+1}}\big | (\Delta_q^\v F^\e |
 \Delta_q^\v R^\e)_{L^2} (t)\big | \, dt\leq C s_q \| R^\e\| _{\tilde L^\infty_{k} B^{0,\frac12}}  
\, . \end{equation}
\end{lem}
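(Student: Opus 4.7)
The plan splits into two stages: a purely functional reduction of estimate~(\ref{estimF}) to a uniform bound on $\|F^\e\|_{\tilde L^1_k B^{0,\frac12}}$, and the analysis of $F^\e$ itself using the equations satisfied by $u^\h$, $w$, $p_0$ and $p_1$.

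For the reduction, Cauchy--Schwarz in space and H\"older in time applied to each vertical dyadic block give
$$\int_{t_k}^{t_{k+1}} \bigl|(\Delta_q^\v F^\e \mid \Delta_q^\v R^\e)_{L^2}(t)\bigr|\, dt \le \|\Delta_q^\v F^\e\|_{L^1_k L^2}\,\|\Delta_q^\v R^\e\|_{L^\infty_k L^2}.$$
By the very definition of the $\tilde L^\infty_k B^{0,\frac12}$ and $\tilde L^1_k B^{0,\frac12}$ norms, the sequences $d_q := 2^{q/2}\|\Delta_q^\v R^\e\|_{L^\infty_k L^2}/\|R^\e\|_{\tilde L^\infty_k B^{0,\frac12}}$ and $a_q := 2^{q/2}\|\Delta_q^\v F^\e\|_{L^1_k L^2}/\|F^\e\|_{\tilde L^1_k B^{0,\frac12}}$ sum to~$1$, so $s_q := a_q d_q$ satisfies $\sum_q s_q^{1/2} \le (\sum_q a_q)^{1/2}(\sum_q d_q)^{1/2} \le 1$ by Cauchy--Schwarz in~$q$. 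Multiplying the previous display by $2^q$ yields
$$2^q \int_{t_k}^{t_{k+1}} \bigl|(\Delta_q^\v F^\e \mid \Delta_q^\v R^\e)_{L^2}(t)\bigr|\, dt \le s_q\, \|F^\e\|_{\tilde L^1_k B^{0,\frac12}}\, \|R^\e\|_{\tilde L^\infty_k B^{0,\frac12}},$$
and (\ref{estimF}) reduces to proving $\|F^\e\|_{\tilde L^1([t_k,t_{k+1}]; B^{0,\frac12})} \le C$ uniformly in $k$ and $\e$.

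For this uniform bound, I would decompose $F^\e$ into its three pieces: (i)~the self-interaction $\e[w\cdot\nabla(w^\h,0)]_\e$, small thanks to the explicit $\e$ prefactor; (ii)~the convection $[w\cdot \nabla(u^\h,w^3)]_\e$; (iii)~the vertical pressure $(0,[\partial_3(p_0+\e p_1)]_\e)$. The rescaling $[\cdot]_\e$ essentially preserves $B^{0,\frac12}$ since $\dot H^{1/2}(\R)$ is vertically scale-invariant, so matters reduce to estimating the unscaled quantities. For (i) and (ii), the product laws in anisotropic Besov spaces recalled in the Appendix combined with a priori bounds on $w$ derived from equation~(T) (energy plus horizontal parabolic regularisation) and on $u^\h$ from $\rm(NS2D)_{y_3}$ via Proposition~\ref{propNS2D} yield the required control. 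The membership of $(u_0^\h, w_0^3)$ in $\mathcal B^{0,\frac12}\cap \mathcal B^{-1,\frac52}$ provides the vertical regularity needed to close the bounds and to handle the parametric dependence on $y_3$.

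The principal obstacle is piece~(iii), which has no horizontal gradient to absorb the pressure. The pressures satisfy $-\Delta_\h p_0 = \nabla^\h \otimes \nabla^\h : (u^\h\otimes u^\h)$ and an analogous elliptic equation for $p_1$, so $\partial_3 p_0$ is controlled by products involving $\partial_3 u^\h$, the parametric $y_3$-derivative of the 2D flow, obtained by differentiating $\rm(NS2D)_{y_3}$ in~$y_3$. The horizontal periodicity and the divergence-free condition are crucial here: as pointed out in the remark following Theorem~\ref{thmyotopoulos}, $\partial_3 u^\h$ and $\partial_3 w^3$ have zero horizontal mean, so $(\Delta_\h)^{-1}$ is well-defined and bounded, and the identity $w^\h = -\nabla^\h(\Delta_\h)^{-1}\partial_3 w^3$ transfers vertical regularity from $w^3$ to $w^\h$. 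This reflects exactly the ``special structure of the nonlinear term, joint with the divergence-free condition'' emphasised in the abstract and is the reason for the horizontal periodicity assumption.
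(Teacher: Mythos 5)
Your reduction of~(\ref{estimF}) to a uniform bound on $\|F^\e\|_{\tilde L^1_k B^{0,\frac12}}$ is fine as far as it goes, and your treatment of the bilinear pieces and of $[\partial_3 p_0]_\e$ and $[\partial_3 p_{1,\h}]_\e$ matches the paper's (product laws~(\ref{loiprod}) combined with the bounds~(\ref{rappel})--(\ref{rappelwh}), the zero horizontal mean making $(\Delta_\h)^{-1}$ harmless). The gap is the component $p_{1,3}=\sum_{i}\partial_i\partial_3(-\Delta_\h)^{-1}(u^iw^3)$ of the pressure. This term already carries one vertical derivative, so $\e[\partial_3 p_{1,3}]_\e$ involves $\partial_3^2(u^iw^3)$; since the $B^{0,\frac12}$ norm is essentially invariant under the vertical rescaling $[\cdot]_\e$, the prefactor $\e$ reduces the size but not the number of vertical derivatives, and bounding this piece in $\tilde L^1_kB^{0,\frac12}$ would require a product estimate of the endpoint type $\mathcal B^{0,\frac52}\times\mathcal B^{0,\frac52}\to\mathcal B^{-1,\frac52}$ (two full vertical derivatives on $u^iw^3$ with no horizontal regularity to spare after the order~$-1$ multiplier $\partial_i(-\Delta_\h)^{-1}$), which is not among the laws~(\ref{loiprod}) and is borderline in two horizontal dimensions. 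Your proposal does not flag this: you only observe that $\partial_3 p_0$ is controlled by products involving $\partial_3 u^\h$, i.e.\ one vertical derivative, not two.

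The paper avoids the problem by \emph{not} applying Cauchy--Schwarz against $\Delta_q^\v R^\e$ for this one term. It first writes $\e[\partial_3 p_{1,3}]_\e=\partial_3[p_{1,3}]_\e$, observes that this forcing acts only on the third component and is therefore paired with $\Delta_q^\v R^{\e,3}$, integrates by parts in $x_3$, uses $\partial_3R^{\e,3}=-\Div_\h R^{\e,\h}$, and integrates by parts again horizontally, ending up with $\nabla^\h[p_{1,3}]_\e$ tested against $R^{\e,\h}$ --- which involves only one vertical derivative of $u^iw^3$ and is exactly what Proposition~\ref{propositionpression} controls. This is precisely the ``special structure joint with the divergence-free condition'' that you quote, but it lives in the duality pairing with $R^\e$ and is therefore foreclosed by your wholesale reduction to $\|F^\e\|_{\tilde L^1_kB^{0,\frac12}}\le C$. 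To repair the argument you would need either to keep this term inside the pairing and reproduce the double integration by parts, or to supply the missing endpoint product law together with the stronger a priori bounds it demands.
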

Let us return to~(\ref{energyestimate}). By gathering~(\ref{estimRR}),~(\ref{estimRuapp}),~(\ref{estimuappR}) and~(\ref{estimF}) we get
$$
\begin{aligned}
& \frac{2^q}2   \| \Delta_q^\v R^\e(t_{k+1})\|_{L^2}^2  +2^q \int_{t_k}^{t_{k+1}}    \|\nabla^\h \Delta_q^\v R^\e (t)\|_{L^2}^2\, dt\leq  \frac{2^q}2   \| \Delta_q^\v R^\e(t_{k })\|_{L^2}^2   \\
 & +Cs_q \|\nabla^\h R^\e\|_{\tilde L^2_k B^{0,\frac12}}^2  \|  R^\e\|_{\tilde L^\infty_{k}   B^{0,\frac12}}   + C\e s_q
  \| R^\e\|  _{\tilde L^\infty_{k}  B^{0,\frac12}} + \frac{s_q}{2}  \|\nabla^\h R^\e\|_{\tilde L^2_k B^{0,\frac12}}^2\\
  &+ Cs_q
  \| R^\e\|  _{\tilde L^\infty_{k}  B^{0,\frac12}} ^2 \big (\| u^\e_{\rm app}\| _{\tilde L^2_{k}  {\mathcal B}^{1,\frac12}} ^2 (1+\|  u^\e_{\rm app}\|  _{\tilde L^\infty_{k}  B^{0,\frac12}}^2 )+  \|  \partial_3 u^\e_{\rm app} \|_{\tilde L^1_{k}  {\mathcal B}^{1,\frac12}}\big )\, .
   \end{aligned}
$$
We used the fact, recalled in~(\ref{estimate11}), that
$$
\|\nabla^\h a\|_{B^{0,\frac12}} \lesssim \|  a\|_{{\mathcal B}^{1,\frac12} } \, .
$$
By taking the square root of the two sides of the equation and summing over~$q\in \Z$ we find that 
$$
 \begin{aligned}
 \| R^\e\|  _{\tilde L^\infty_{k}  B^{0,\frac12}} &+ \|\nabla^\h R^\e\|_{\tilde L^2_k B^{0,\frac12}}  \leq  \| R^\e(t_{k })\|  _{ B^{0,\frac12}}\\
& \quad+  \| R^\e\|  ^\frac12_{\tilde L^\infty_{k}  B^{0,\frac12}}\big( \|\nabla^\h R^\e\|_{\tilde L^2_k B^{0,\frac12}}+ C\sqrt \e\big) \\
&\quad + C  \| R^\e\|  _{\tilde L^\infty_{k}  B^{0,\frac12}} \Big (\| u^\e_{\rm app}\| _{\tilde L^2_{k}  {\mathcal B}^{1,\frac12}}   (1+\|  u^\e_{\rm app}\|  _{\tilde L^\infty_{k}  B^{0,\frac12}}  )+  \|  \partial_3 u^\e_{\rm app} \|_{\tilde L^1_{k}  {\mathcal B}^{1,\frac12}}^\frac12\Big )\, .
   \end{aligned}
$$It is then sufficient to choose the partition~(\ref{partitiontemps}) thanks to   Proposition~\ref{propNS2D} so that
$$
\| u^\e_{\rm app}\| _{\tilde L^2_{k} {\mathcal B}^{1,\frac12}}   (1+\| u^\e_{\rm app}| _{\tilde L^\infty_{k} B^{0,\frac12}} ) \leq \frac1{4 C} =: \frac1{\bar C}
$$
and to choose, still thanks to   Proposition~\ref{propNS2D}, $\e$ small enough so that
$$
  \|  \partial_3 u^\e_{\rm app} \|_{\tilde L^1_{k}  {\mathcal B}^{1,\frac12}}^\frac12\leq\frac1{4 C}\, \cdotp
 $$
 Then the above inequality becomes
 $$
 \frac12 \| R^\e\|  _{\tilde L^\infty_{k}  B^{0,\frac12}} + \|\nabla^\h R^\e\|_{\tilde L^2_k B^{0,\frac12}}  \leq C\| R^\e(t_{k})\|  _{ B^{0,\frac12}}
 + C \| R^\e\|  ^\frac12_{\tilde L^\infty_{k}  B^{0,\frac12}} \| \nabla^\h R^\e\|_{\tilde L^2_k B^{0,\frac12}}+ C  \e \, .
$$
Let now~$T^\e$ be the maximum time for which
$$
 \forall t \leq T^\e \, , \quad \| R^\e\|  ^\frac12_{\tilde L^\infty ([0,t];B^{0,\frac12})} \leq  \frac {1}{2C} \, \cdotp
$$
Then 
as long as~$t_{k+1} \leq T^\e$ we have
$$
  \frac12 \| R^\e\| _{\tilde L^\infty_{k} B^{0,\frac12}} +\frac12 \|\nabla^\h R^\e\|_{\tilde L^2_k B^{0,\frac12}} \leq C \| R^\e(t_{k })\| _{ B^{0,\frac12}} + C \e
$$
and as~$ \| R^\e(t_{0})\| _{ B^{0,\frac12}} = 0$, by iterating~$K$ times this inequality we find (see~\cite{gip,bg} for example) that~$T^\e= \infty$ and that there exists a constant~$C_0$ (depending on~$u_0^\h$ and~$w_0$ via   Proposition~\ref{propNS2D}) such that
$$
  \| R^\e\| _{\tilde L^\infty(\R^+;B^{0,\frac12})} +\frac12 \|\nabla^\h R^\e\|_{\tilde L^2(\R^+;B^{0,\frac12})} \lesssim \e \exp C_0 \, .
$$
This concludes the proof of   Theorem~{\rm{\ref{thmyotopoulos}}}.  \qed

\subsection{Proof of Lemma~\ref{demlemproduct}} \label{demlemproduit}
We begin by noting that since the divergence of $R^\e$ is zero
 $$
 - \big( \Delta_q^\v (R^\e\cdot \nabla u^\e_{\rm app} )  |
 \Delta_q^\v R^\e\big)_{L^2} = \sum_{\ell = 1}^2  \big( \Delta_q^\v (R^{\e,\ell} u^\e_{\rm app} )  |\partial_\ell
 \Delta_q^\v R^\e\big)_{L^2} +   \big( \Delta_q^\v (R^{\e,3} u^\e_{\rm app} )  |\partial_3
 \Delta_q^\v R^\e\big)_{L^2}  \, .
  $$
We set
  $$
  I_q:=   \big |\sum_{\ell = 1}^2 \big ( \Delta_q^\v (R^{\e,\ell} u^\e_{\rm app} )  |\partial_\ell
 \Delta_q^\v R^\e\big)_{L^2} \big |  \and  J_q:=  \big |\big ( \Delta_q^\v (R^{\e,3} u^\e_{\rm app} )  |\partial_3
 \Delta_q^\v R^\e\big)_{L^2}  \big |\, .
  $$
 Let us start by studying the contribution of~$I_q$. There holds
  \begin{equation}\label{estimateIq}
 \begin{aligned}
  2^{ q}\int_{t_k}^{t_{k+1}}  I_q(t) \, dt& \lesssim    2^{ \frac q2}\big\| \Delta_q^\v (R^{\e,\h}  u^\e_{\rm app} )\big\|_{L^2_{k}  L^2}  2^{ \frac q2}\big\| \Delta_q^\v \nabla^\h R^\e \big\|_{L^2_{k}  L^2}\\
 & \lesssim    2^{ \frac q2}\big\| \Delta_q^\v (R^{\e,\h} u^\e_{\rm app} )\big\|_{L^2_{k}  L^2}  d_q  \| \nabla^\h R^\e\| _{\tilde L^2_{k}B^{0,\frac12}}
\end{aligned}
\end{equation}
with the generic notation presented in the introduction: $(d_q)_{q\in \Z}$ is a sequence of positive real numbers such that $$
\sum_{q \in \ZZ} d_q  \leq 1 \, .
$$

We also recall the notation~$\tilde L^r_{k}X =\tilde L^r([t_{k },t_{k+1}];X)$.
We then use the Bony decomposition into paraproduct and remainder~(\ref{paraproduit}) which allows us to write
\begin{equation}\label{firstdecompo}
 \begin{aligned}
2^{ \frac q2}\big\| \Delta_q^\v (R^{\e,\ell} u^\e_{\rm app} )\big\|_{L^2_{k}  L^2} 
& \lesssim 2^{ \frac q2} \sum_{q'\sim q } \|S_{q'-1}^\v R^\e\|_{L^4_{k}  L^4_\h L^\infty_\v } \|\Delta_{q' }^\v   u^\e_{\rm app}\|_{L^4_{k}L^4_\h  L^2_\v } \\
& + 2^{ \frac q2} \sum_{q'\sim q }\|S_{q'-1}^\v  u^\e_{\rm app} \|_{L^4_{k}  L^4_\h L^\infty_\v  } \|\Delta_{q' }^\v R^\e\|_{L^4_{k}   L^4_\h L^2_\v} \\
& + 2^{ q} \sumetage{2^{q'} \gtrsim 2^q} { q'' \sim   {q'}} 
 \|\Delta_{q'' }^\v   u^\e_{\rm app}\|_{L^4_{k}  L^4_\h L^2_\v}  \|\Delta_{q' }^\v R^\e\|_{L^4_{k}  L^4_\h L^2_\v} \\
&=:T_{q\h}^1 +  T_{q\h}^2+ R_{q\h}\, , 
 \end{aligned}
\end{equation}
where we used Bernstein's inequality~(\ref{bernstein}) in the last inequality:
$$
\big\| \Delta_q^\v (R^{\e,\ell} u^\e_{\rm app} )\big\|_{L^2_{k}  L^2} \lesssim  2^{ \frac q2} \big\| \Delta_q^\v (R^{\e,\ell} u^\e_{\rm app} )\big\|_{L^2_{k}  L^2_\h L^1_\v} \, .
$$
Let us estimate each of the terms in succession. For~$T_{q\h}^1$ we start by noting that for any function~$a$ and any~$x_3 \in \R$ we have thanks to the Sobolev embedding~$H^\frac12(\T^2) \subset L^4(\T^2)$, 
 $$\begin{aligned}
\|a(\cdot,x_3)\|_{ L^4(\T^2)} &\lesssim \|a(\cdot,x_3)\|_{H^\frac12(\T^2)} \\
&\lesssim \|a(\cdot,x_3)\|_{L^2(\T^2)}^\frac12 \|  \nabla^\h  a(\cdot,x_3)\|_{L^2(\T^2)}^\frac12
\end{aligned}
$$
so by the Cauchy-Schwarz inequality in~$x_3$ there holds
$$
\|a\|_{L^2_\v L^4_\h} \lesssim \|a\|_{L^2}^\frac12 \|  \nabla^\h  a\|_{L^2}^\frac12 \, .
$$
Using again Bernstein's inequality~(\ref{bernstein}) 
$$
 \|\Delta_{q''}^\v R^\e\|_{L^4_{k}  L^4_\h L^\infty_\v } \lesssim 2^{\frac{q''}2} \|\Delta_{q''}^\v R^\e\|_{L^4_{k}   L^4_\h L^2_\v} 
$$
and then Minkowski's inequality
$$
  \|\Delta_{q''}^\v R^\e\|_{L^4_{k}   L^4_\h L^2_\v} 
\leq   \|\Delta_{q''}^\v R^\e\|_{L^4_{k}    L^2_\v L^4_\h} 
$$
we gather
$$
 \begin{aligned}
 \|S_{q'-1}^\v R^\e\|_{L^4_{k}   L^4_\h L^\infty_\v} &\lesssim \sum_{2^{q''} \lesssim 2^ {q '} } 2^{\frac{q''}2} \|\Delta_{q''}^\v R^\e\|_{L^4_{k}  L^2_\v L^4_\h} \\
  &\lesssim \sum_{2^{q''} \lesssim 2^ {q' } } 2^{\frac{q''}2} \|\Delta_{q''}^\v R^\e\|^\frac12_{L^\infty_{k}  L^2}  \|\Delta_{q''}^\v    \nabla^\h R^\e\|^\frac12_{L^2_{k}  L^2} 
  \\
  &\lesssim  \| R^\e\|^\frac12_{\tilde L^\infty_{k}  B^{0,\frac12}}  \|  \nabla^\h R^\e\|^\frac12_{\tilde L^2_{k}   B^{0,\frac12}}   
 \end{aligned}
$$
 by the Cauchy-Schwarz inequality.  Similarly
 $$
 \begin{aligned}
 \|\Delta_{q' }^\v   u^\e_{\rm app}\|_{L^4_{k}   L^4_\h L^\infty_\v}    &\lesssim 2^{\frac{q'}2} \|\Delta_{q' }^\v   u^\e_{\rm app}\|^\frac12_{L^\infty_{k}  L^2}  \|\Delta_{q' }^\v    \nabla^\h   u^\e_{\rm app}\|^\frac12_{L^2_{k}  L^2} 
  \\
  &\lesssim d_{q' }  \| u^\e_{\rm app}\|^\frac12_{\tilde L^\infty_{k}  B^{0,\frac12}}  \|  \nabla^\h u^\e_{\rm app}\|^\frac12_{\tilde L^2_{k}   B^{0,\frac12}}   
 \end{aligned}
$$ hence finally
 $$
 \begin{aligned}
T_{q\h}^1   \lesssim  d_q \| R^\e\| _{\tilde L^\infty_{k}B^{0,\frac12}}^\frac12\|    \nabla^\h R^\e\| _{\tilde L^2_{k}B^{0,\frac12}}^\frac12   \|  u^\e_{\rm app}\| _{\tilde L^\infty_{k}B^{0,\frac12}}^\frac12\|    \nabla^\h  u^\e_{\rm app}\| _{\tilde L^2_{k}B^{0,\frac12}}^\frac12  \, ,
 \end{aligned}
$$
and symmetrically $$
 \begin{aligned}
T_{q\h}^2 \lesssim d_q \| R^\e\| _{\tilde L^\infty_{k}B^{0,\frac12}}^\frac12\|    \nabla^\h R^\e\| _{\tilde L^2_{k}B^{0,\frac12}}^\frac12  \|  u^\e_{\rm app}\| _{\tilde L^\infty_{k}B^{0,\frac12}}^\frac12\|    \nabla^\h  u^\e_{\rm app}\| _{\tilde L^2_{k}B^{0,\frac12}}^\frac12\, .
   \end{aligned}
$$
Finally~$R_{q\h} $ can be estimated by an analogous argument. One has indeed
  $$
 \begin{aligned}
R_{q\h}     &=  2^{ q} \sumetage{2^{q'} \gtrsim 2^q} { q'' \sim   {q'}} 
 \|\Delta_{q'' }^\v   u^\e_{\rm app}\|_{L^4_{k}  L^4_\h L^2_\v}  \|\Delta_{q' }^\v R^\e\|_{L^4_{k}  L^4_\h L^2_\v} \\
  &\lesssim  \sumetage{2^{q'} \gtrsim 2^q} { q'' \sim   {q'}} 
    2^{  {q-q''} }     2^{ \frac{q''}2}   \|\Delta_{q'' }^\v     u^\e_{\rm app}\|^\frac12_{L^\infty_{k}  L^2}  \|\Delta_{q'' }^\v  \nabla^\h   u^\e_{\rm app}\|^\frac12_{L^\infty_{k}  L^2}  
     2^{ \frac{q''}2}   \|\Delta_{q' }^\v R^\e\|^\frac12_{L^\infty_{k}  L^2}  \|\Delta_{q'}^\v  \nabla^\h R^\e\|^\frac12_{L^\infty_{k}  L^2}  
 \end{aligned}
$$ 
hence by Young's inequality for series $$
 \begin{aligned}
R_{q\h} \lesssim  d_q \| R^\e\| _{\tilde L^\infty_{k}B^{0,\frac12}}^\frac12\|    \nabla^\h R^\e\| _{\tilde L^2_{k}B^{0,\frac12}}^\frac12  \|  u^\e_{\rm app}\| _{\tilde L^\infty_{k}B^{0,\frac12}}^\frac12\|    \nabla^\h  u^\e_{\rm app}\| _{\tilde L^2_{k}B^{0,\frac12}}^\frac12  \, .
 \end{aligned}
$$
In conclusion, by returning to~(\ref{estimateIq}) we obtain
$$
 2^{ q}\int_{t_k}^{t_{k+1}}  I_q(t) \, dt\lesssim s_q  \| R^\e\| _{\tilde L^\infty_{k}B^{0,\frac12}}^\frac12 \| \nabla^\h R^\e\| _{\tilde L^2_{k}B^{0,\frac12}}^\frac32
  \|  u^\e_{\rm app}\| _{\tilde L^\infty_{k}B^{0,\frac12}}^\frac12\|    \nabla^\h  u^\e_{\rm app}\| _{\tilde L^2_{k}B^{0,\frac12}}^\frac12  
$$
and by Young's inequality~(\ref{young})  and~(\ref{estimate11}) we get
\beq\label{estimate1}
 \begin{aligned}
2^{ q}\int_{t_k}^{t_{k+1}}  I_q(t) \, dt&\leq   \frac {s_q}{100}\|\nabla^\h R^\e\|_{\tilde L^2_{k}  B^{0,\frac12}} ^2  \\
 &\quad  + Cs_q
  \| R^\e\|  _{\tilde L^\infty_{k}  B^{0,\frac12}} ^2 \| u^\e_{\rm app}\| _{\tilde L^2_{k}   {\mathcal B}^{1,\frac12}} ^2 \|  u^\e_{\rm app}\|  _{\tilde L^\infty_{k}  B^{0,\frac12}}^2 \, .
 \end{aligned}
\eeq
Let us now study the contribution of~$J_q$. We notice that by~(\ref{bernstein}) 
$$
\|\partial_3\Delta_q^\v  R^\e\|_{L^2} \lesssim  2^{ q}\|\Delta_q^\v  R^\e\|_{L^2} 
$$
hence
 \begin{equation}\label{estimateJq}
 \begin{aligned}
  2^{ q}\int_{t_k}^{t_{k+1}} J_q(t) dt& \lesssim    2^{2q}\big\| \Delta_q^\v (R^{\e,3}  u^\e_{\rm app} )\big\|_{L^1_{k}  L^2}   \big\| \Delta_q^\v  R^\e \big\|_{L^\infty_{k}  L^2}\\
 & \lesssim    2^{ \frac {3q}2}\big\| \Delta_q^\v (R^{\e,3} u^\e_{\rm app} )\big\|_{L^1_{k}  L^2}  d_q  \|  R^\e\| _{\tilde L^\infty_{k}B^{0,\frac12}}\, ,
\end{aligned}
\end{equation}
so we proceed as above by decomposing the first term into   paraproduct and remainder :
 \begin{equation}\label{seconddecompo}
 \begin{aligned}
 2^{ \frac {3q}2}\big\| \Delta_q^\v (R^{\e,3} u^\e_{\rm app} )\big\|_{L^1_{k}  L^2} 
& \lesssim  2^{ \frac {3q}2} \sum_{q'\sim q } \|S_{q'-1}^\v R^{\e,3}\|_{L^\infty_{k}  L^2_\h L^\infty_\v } \|\Delta_{q' }^\v   u^\e_{\rm app}\|_{L^1_{k}  L^\infty_\h L^2_\v} \\
& +  2^{ \frac {3q}2}\sum_{q'\sim q }\|S_{q'-1}^\v  u^\e_{\rm app} \|_{L^2_{k}  L^\infty  } \|\Delta_{q' }^\v R^{\e,3}\|_{L^2_{k}  L^2} \\
& + 2^{ 2q} \sumetage{2^{q'} \gtrsim 2^q} { q'' \sim   {q'}} 
 \|\Delta_{q'' }^\v   u^\e_{\rm app}\|_{L^1_{k}   L^\infty_\h L^2_\v}  \|\Delta_{q' }^\v R^{\e,3}\|_{L^\infty_{k}  L^2} \\
&=:T_{q3}^1 +  T_{q3}^2+ R_{q3}\, .
 \end{aligned}
\end{equation}
Note on the one hand that
   $$
 \begin{aligned}
 T_{q3}^1 
& \lesssim 2^{ \frac {3q}2} \sumetage{2^{q''}\lesssim 2^{q'}}{q'\sim q}  2^\frac {q''}2\|\Delta_{{q''}}^\v R^\e\|_{L^\infty_{k}  L^2} \|\Delta_{q' }^\v     u^\e_{\rm app}\|_{L^1_{k}  L^\infty_\h L^2_\v} \\
 & \lesssim  \| R^\e\| _{\tilde L^\infty_{k}B^{0,\frac12}}  \sum_{q'\sim q } 2^{ \frac {3q'}2} \|  \Delta_{q' }^\v u^\e_{\rm app}\| _{  L^1_{k}  L^\infty_\h L^2_\v}   
 \end{aligned}
$$
and thanks to~(\ref{estimate11}) we therefore have
\beq\label{tq1}
  T_{q3}^1   \lesssim    d_q  \| R^\e\| _{\tilde L^\infty_{k}B^{0,\frac12}}  \|  \partial_3  u^\e_{\rm app}\| _{  L^1_{k}  \mathcal B^{1,\frac12}}  \, .
\eeq
The second term of the decomposition can thus be estimated by~(\ref{inversebernstein}):
$$
 \begin{aligned}
  T_{q3}^2& \lesssim 2^{ \frac {3q}2}\sumetage{2^{q''}\lesssim 2^{q'}}{q'\sim q} 2^\frac {q''}2\|\Delta_{{q''}}^\v u^\e_{\rm app}\|_{L^2_{k}  L^\infty_\h L^2_\v} \|\Delta_{q' }^\v     R^{\e,3} \|_{L^2_{k}L^2} \\
 & \lesssim   2^{ \frac { q}2}\sumetage{2^{q''}\lesssim 2^{q'}}{q'\sim q}2^\frac {q''}2\|\Delta_{{q''}}^\v u^\e_{\rm app}\|_{L^2_{k}  L^\infty_\h L^2_\v} \|\Delta_{q' }^\v     \partial_3 R^{\e,3} \|_{L^2_{k}L^2} \\
 & \lesssim     \|  u^\e_{\rm app}\| _{  \tilde L^2_{k}  \mathcal B^{1,\frac12}}  2^{ \frac { q}2}\sum_{q'\sim q}  \|\Delta_{q' }^\v\nabla^\h R^{\e } \|_{L^2_{k}L^2} 
   \end{aligned}
$$
as above and thanks to~(\ref{estimate11}) and to the fact that $  \partial_3 R^{\e,3} = - \mbox{div}_\h \, R^{\e,\h} $. It follows that
\beq\label{tq2}
  T_{q3}^2  \lesssim   d_q  \|  u^\e_{\rm app}\| _{ \tilde  L^2_{k}  \mathcal B^{1,\frac12}}  \|\nabla^\h R^{\e } \|_{\tilde L^2_{k}B^{0,\frac12}} \, .
\eeq
Finally for the remainder term we write, again by~(\ref{inversebernstein}),
\beq\label{rq}
 \begin{aligned}
R_{q3}
& \lesssim 2^{2q} \sumetage{2^{q'}\gtrsim 2^q}{ q'' \sim   {q'}}
2^ {-q''}\|\Delta_{{q''}}^\v \partial_3 u^\e_{\rm app}\|_{L^1_{k}  L^\infty_\h L^2_\v} \|\Delta_{q' }^\v     R^\e \|_{L^\infty_{k}L^2} \\
& \lesssim \sumetage{2^{q'}\gtrsim 2^q}{ q'' \sim   {q'}} 2^{2(q-q'')}
 2^\frac {q''}2 \|\Delta_{{q''}}^\v \partial_3 u^\e_{\rm app}\|_{L^1_{k}  L^\infty_\h L^2_\v} 2^\frac {q'}2\|\Delta_{q' }^\v     R^\e \|_{L^\infty_{k}L^2} \\
 & \lesssim d_q  \| R^\e\| _{\tilde L^\infty_{k}B^{0,\frac12}}   \| \partial_3 u^\e_{\rm app}\| _{  L^1_{k}  \mathcal B^{1,\frac12}} \, .  \end{aligned}
\eeq
Inserting~(\ref{tq1})-(\ref{rq}) into~(\ref{seconddecompo}) it follows that
$$
 \begin{aligned}
2^{ \frac {3q}2}\big\| \Delta_q^\v (R^{\e,3} u^\e_{\rm app} )\big\|_{L^1_{k}  L^2} 
\lesssim   d_q \Big(     \| R^\e\| _{\tilde L^\infty_{k}B^{0,\frac12}}  \|  \partial_3  u^\e_{\rm app}\| _{  L^1_{k}  \mathcal B^{1,\frac12}}  +  \|\nabla^\h R^{\e } \|_{\tilde L^2_{k}B^{0,\frac12}}  \|  u^\e_{\rm app}\| _{ \tilde  L^2_{k}  \mathcal B^{1,\frac12}}
  \Big)
\end{aligned}
$$
hence returning to~(\ref{estimateJq})
$$2^{ q}\int_{t_k}^{t_{k+1}} J_q(t) dt
  \lesssim s_q  \Big(     \| R^\e\| _{\tilde L^\infty_{k}B^{0,\frac12}}  \|  \partial_3  u^\e_{\rm app}\| _{  L^1_{k}  \mathcal B^{1,\frac12}}  +  \|\nabla^\h R^{\e } \|_{\tilde L^2_{k}B^{0,\frac12}}  \|  u^\e_{\rm app}\| _{ \tilde  L^2_{k}  \mathcal B^{1,\frac12}}
  \Big)
  \|  R^\e\| _{\tilde L^\infty_{k}B^{0,\frac12}}\, .$$
Finally we find \beq\label{estimate2}
 \begin{aligned}
2^{ q}\int_{t_k}^{t_{k+1}}  J_q(t) \, dt&\leq   \frac {s_q}{100}\|\nabla^\h R^\e\|_{\tilde L^2_{k}  B^{0,\frac12}} ^2  \\
 &\quad  + Cs_q
  \| R^\e\|  _{\tilde L^\infty_{k}  B^{0,\frac12}} ^2 \Big( \| u^\e_{\rm app}\| _{\tilde L^2_{k}   {\mathcal B}^{1,\frac12}} ^2+   \|  \partial_3  u^\e_{\rm app}\| _{  L^1_{k}  \mathcal B^{1,\frac12}}\Big)\, .
 \end{aligned}
\eeq
Gathering~(\ref{estimate1}) and~(\ref{estimate2}), Lemma~\ref{demlemproduct} is proved. \qed

 %%%%%%%%%%%%%%%%%%%%%%%%%%%%%%%%%%%%%%%%%%

 \section{Proof of Lemma~\ref{lemdemforceext}} \label{demforceext}

\subsection{Estimates on the approximate solution} In this section we prove some a priori estimates on~$u^\e_{\rm app}$, whose definition we recall:
$$
 u^\e_{\rm app} =\big[u^\h +\varepsilon w^\h,w^3\big]_\e\, ,
 $$
 with~$u^\h$ solution of~$ {\rm(NS2D)}_{y_3} $ and~$w$ solution of~$ {\rm(T)}$ as defined in the introduction. These estimates were used in Section~\ref{find the proof}
 to prove   Theorem~\ref{thmyotopoulos}.   \begin{prop}\label{propNS2D}
There is a constant~$C_1>0$ depending on~$\|(u_0^\h,w_0)\|_{{\mathcal B}^{0,\frac12}  }$ and~$C_2>0$ depending on~$\|(u_0^\h,w_0)\|_{  {\mathcal B}^{-1,\frac52}\cap {\mathcal B}^{0,\frac12} }$ such that
 $$
\|u^\e_{\rm app} \|_{\tilde L^\infty(\R^+;B^{0,\frac12})} + \| u^\e_{\rm app} \|_{\tilde L^2(\R^+;{\mathcal B}^{1,\frac12})}  +  \|\partial_3 u^\e_{\rm app} \|_{\tilde L^2(\R^+;{\mathcal B}^{0,\frac12})  } + \| u^\e_{\rm app} \|_{\tilde L^2(\R^+;{\mathcal B}^{0,\frac12})}\leq C_1 $$ 
and
$$
 \|  \partial_3 u^\e_{\rm app} \|_{  L^1(\R^+;{\mathcal B}^{1,\frac12})}  \leq \e  \, C_2
 \, .
$$
Finally for any constant~$\bar C>0$ there is a constant~$K>1$ depending on~$\|(u_0^\h,w_0)\|_{{\mathcal B}^{0,\frac12}  }$ and on the times~$0= t_0 <t_1<\dots< t_K=\infty$ such that
$$
\R^+= \bigcup_{k=0}^{K-1} [t_k,t_{k+1}[ \quad  \mbox{and} \quad  \forall \e \in ]0,1[ \, , \,  \,   \|u^\e_{\rm app} \|_{\tilde L^2(\R^+;{\mathcal B}^{1,\frac12})}  (1+\|u^\e_{\rm app} \|_{\tilde L^\infty(\R^+;B^{0,\frac12})} ) \leq \frac1{\bar   C} \, \cdotp
$$
 \end{prop}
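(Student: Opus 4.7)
The strategy is to transfer every norm on $u^\e_{\rm app}=[u^\h+\e w^\h,w^3]_\e$ onto a norm of the unscaled fields $(u^\h,w)$, then to produce the latter via the parameter-dependent 2D Navier--Stokes theory and an energy estimate on the linear equation~$({\rm T})$. A direct Fourier computation $\widehat{[g]_\e}(\xi_\h,\xi_3)=\e^{-1}\hat g(\xi_\h,\xi_3/\e)$, combined with a change of variable in the Littlewood--Paley sum, shows that norms with vertical regularity index exactly $1/2$ are invariant up to absolute constants under $g\mapsto[g]_\e$; since moreover $\partial_3[g]_\e=\e[\partial_3 g]_\e$, this accounts at once for the factor~$\e$ in the second display and reduces the proposition to uniform (in $\e$) bounds on $(u^\h,w^3)$ and their vertical derivatives in the relevant $B^{0,1/2}$ and $\mathcal B^{s,1/2}$ spaces. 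The contribution of $\e w^\h$ is controlled through the divergence-free identity $w^\h=-\nabla^\h(\Delta_\h)^{-1}\partial_3 w^3$, which is valid because $\partial_3 w^3$ has zero horizontal mean.

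For $u^\h$, the key observation is that for each fixed~$y_3$ the system $({\rm NS2D})_{y_3}$ admits a unique global Leray solution obeying the 2D energy identity, and that the dyadic blocks $\Delta_j^\h\Delta_q^\v$ commute with the 2D operator on the horizontal variables. Applying these blocks to $({\rm NS2D})_{y_3}$, squaring the energy estimate, multiplying by the appropriate anisotropic dyadic weights and summing over $(j,q)$ yields the bounds in $\tilde L^\infty B^{0,1/2}$, $\tilde L^2\mathcal B^{0,1/2}$ and $\tilde L^2\mathcal B^{1,1/2}$ in terms of $\|u_0^\h\|_{\mathcal B^{0,1/2}}$. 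Vertical derivatives are handled by differentiating $({\rm NS2D})_{y_3}$ in $y_3$ and running the analogous energy estimate on the linearization, whose drift is the already-controlled $u^\h$.

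For $w$, one uses that the pressure $-(\nabla^\h p_1,0)$ has no vertical component, so that $w^3$ satisfies the scalar linear drift--diffusion equation $\partial_t w^3+u^\h\cdot\nabla^\h w^3-\Delta_\h w^3=0$. Applying $\Delta_j^\h\Delta_q^\v$, doing an $L^2$ energy estimate, and handling the commutator with the transport by a paraproduct--remainder decomposition exactly as in the proof of Lemma~\ref{demlemproduct}, one propagates every bound established on $u^\h$ to $w^3$, and recovers $w^\h$ via the identity above. Once the global bound $\|u^\e_{\rm app}\|_{\tilde L^2(\R^+;\mathcal B^{1,1/2})}\leq C_1$ is known, the absolute continuity of the Lebesgue integral together with the uniform bound on $\|u^\e_{\rm app}\|_{\tilde L^\infty B^{0,1/2}}$ provides a finite partition $0=t_0<\cdots<t_K=\infty$ on each slab of which $\|u^\e_{\rm app}\|_{\tilde L^2_k\mathcal B^{1,1/2}}(1+\|u^\e_{\rm app}\|_{\tilde L^\infty_k B^{0,1/2}})\leq 1/\bar C$, uniformly in $\e$.

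The main difficulty is the $L^1_t\mathcal B^{1,1/2}$ bound on $\partial_3 u^\h$: since there is no vertical viscosity, the needed $L^1$-in-time integrability at the level of two horizontal derivatives cannot come from a direct energy estimate and must be extracted from the additional initial regularity~$\mathcal B^{-1,5/2}$ via an $L^1$-maximal regularity argument for the 2D heat semigroup, which trades vertical regularity for horizontal derivatives on the parabolic scale. This is precisely what the $\mathcal B^{-1,5/2}$ assumption on the data is designed to provide.
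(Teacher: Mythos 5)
Your reduction steps are exactly those of the paper: the $\mathcal B^{\sigma,\frac12}$ and $B^{0,\frac12}$ norms are invariant under $g\mapsto[g]_\e$ because the vertical index is exactly $1/2$, the identity $\partial_3[g]_\e=\e[\partial_3 g]_\e$ produces the factor $\e$, $w^\h$ is recovered from $\partial_3 w^3$ through $-\nabla^\h(\Delta_\h)^{-1}$ using the vanishing horizontal mean, and the final partition comes from slicing the time interval of an $\e$-independent $\tilde L^2$ norm. The only real difference is that the paper does not re-prove the a priori estimates on $u^\h$ and $w^3$: it quotes Propositions 3.1 and 3.5 of~\cite{bcg}, which give $\|u^\h\|_{\tilde L^r(\R^+;\mathcal B^{s+\frac2r,s'})}\lesssim C$ for $s\in\,]-2,1[$, $s'\geq\frac12$ (and the analogue for $w^3$), applied with $(s,s',r)=(-1,\frac32,1)$ resp. $(-1,\frac52,1)$ for the $L^1$-in-time bound. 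Your plan to re-derive these is workable but glosses over two points. First, $\Delta_q^{\rm v}$ does not commute with the nonlinearity of $({\rm NS2D})_{y_3}$ in the $y_3$ variable, so the 2D energy estimate must itself be run through a vertical paraproduct decomposition (you acknowledge this for the $w$ equation but not for $u^\h$). Second, the $L^1_t\mathcal B^{1,\frac12}$ bound does not ``trade vertical regularity for horizontal derivatives'': the horizontal gain of two derivatives comes from $\tilde L^1$ maximal regularity of the horizontal heat flow starting from horizontal index $-1$, while the vertical index $5/2$ of the data is simply propagated and is consumed by the vertical derivatives falling on $w^3$ (one for $\partial_3 u^\h$ and $\partial_3 w^3$, two for $\partial_3 w^\h=-\nabla^\h(\Delta_\h)^{-1}\partial_3^2w^3$). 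Neither point is a fatal gap, but both require the machinery of~\cite{bcg} rather than a bare energy identity.
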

 \begin{proof}
By Proposition 3.1 of~\cite{bcg} we know that for any given~$s \in ]-2 ,1[$ and for any~$s'\geq \frac12$ we have 
 \begin{equation}\label{rappel}
\forall r \in [1,\infty] \, , \quad \|u^\h \|_{\tilde L^r(\R^+; {\mathcal B}^{s+\frac2r,s'})}       \lesssim C\, ,
 \end{equation}
 where~$C$ depends on~$\|u_0^\h\|_{{\mathcal B}^{s,s'}\cap {\mathcal B}^{0,\frac12} }$.
On the other hand Proposition 3.5 of~\cite{bcg}  implies that for all~$s \in ]-2 ,0[$ and all~$s'\geq \frac12$
  \begin{equation}\label{rappelw}
 \|w^3 \|_{\tilde L^r(\R^+; {\mathcal B}^{s+\frac2r,s'})}       \lesssim C\, ,
 \end{equation}
where~$C$ depends on the norms of~$u_0^\h$ et~$w_0^3$ in~${\mathcal B}^{s,s'}\cap {\mathcal B}^{0,\frac12} $.
Concerning~$w^\h$ we note that$$
w^\h = - \nabla^\h (\Delta_\h)^{-1} \partial_3 w^3 \, .
$$
As the horizontal average of~$\partial_3w^3$ is zero, 
for all~$s \in ]-2 ,0[$, all~$s'\geq -\frac12$ and all~$r \in [1,\infty]$ there holds\begin{equation}\label{rappelwh}
\begin{aligned}
 \|w^\h \|_{\tilde L^r(\R^+; {\mathcal B}^{s+\frac2r,s'})}  
  &  \lesssim\|\partial_3w^3 \|_{\tilde L^r(\R^+; {\mathcal B}^{s+\frac2r+1,s' })} \\
    &  \lesssim\|\partial_3w^3 \|_{\tilde L^r(\R^+; {\mathcal B}^{s+\frac2r,s' })} \\
 & \leq  C\,,
 \end{aligned}
 \end{equation}
  where~$C$ depends on the norms of~$u_0^\h$ and~$w_0^3$ in~${\mathcal B}^{s,s'+1}\cap {\mathcal B}^{0,\frac12} $. We used above that if a function~$f$ defined on~$\T^2$ has zero horizontal mean, then$$
  s_1 \geq s_2 \Longrightarrow  \|f\|_{{\mathcal B}^{s_1,s'} }\leq \|f\|_{{\mathcal B}^{s_2,s'} }\, .
$$
The first estimate of   Proposition~\ref{propNS2D}
 comes then simply from the fact that by definition of~$ u^\e_{\rm app}$, for any~$\sigma \in\R$,
$$ \| u^\e_{\rm app} \|_{\tilde L^r(\R^+;{\mathcal B}^{\sigma,\frac12})}   =  \|  (u^\h + \e w^\h,w^3) \|_{\tilde L^r(\R^+;{\mathcal B}^{\sigma,\frac12})}    \, ,
$$ 
along with the continuous embedding of~${\mathcal B}^{ \sigma,\frac12}$ into~$ B^{ \sigma,\frac12}$ recalled in~(\ref{injcont}).
 \medskip
 
 \noindent 
For the second estimate of the Proposition~\ref{propNS2D} we apply~(\ref{rappel})-(\ref{rappelwh}) to~$s=-1$, $s'= 3/2$ and~$r=1$. From the definition of~$u^\e_{\rm app}$ we have indeed that

 \begin{equation}\label{estimate00}
\begin{aligned}
   \|  \partial_3 u^\e_{\rm app} \|_{\tilde L^1(\R^+; {\mathcal B}^{1,\frac12})} &= \e   \|  \partial_3 (u^\h + \e w^\h,w^3)\|_{\tilde L^1(\R^+; {\mathcal B}^{1,\frac12})}  
\end{aligned}
\end{equation}
by the same calculations as above,
which completes the proof thanks to~(\ref{rappel})-(\ref{rappelwh}).

 \medskip
 
 \noindent 
Finally, the last result of the proposition is simply that $$
   \|  u^\e_{\rm app} \|_{\tilde L^2(\R^+;{\mathcal B}^{1,\frac12})}  =   \|( u^\h + \e w^\h,w^3)  \|_{\tilde L^2(\R^+;{\mathcal B}^{1,\frac12})}  
$$
and so the time integration interval can be sliced to make the time norms arbitrarily small, regardless of~$\e$. The proposition is proved.
   \end{proof}

   \subsection{Estimates on the pressure}
\begin{prop}\label{propositionpression}
 There are two  constants,~$C_3$ depending on~$\| u_0^\h \|_{{\mathcal B}^{-\frac12,\frac32} \cap{\mathcal B}^{0,\frac12}} $ and~$C_4$ depending on~$\| (u_0^\h,w_0^3) \|_{{\mathcal B}^{-\frac12,\frac52}  \cap{\mathcal B}^{0,\frac12}} $ 
 such that the following holds: $p_0$ satisfies
 $$
 \big \|[\partial_3p_0]_\e\big\|_{L^1(\R^+;  B^{0,\frac12})} 
 \leq C_3 \, ,
$$
and~$p_1$ can be written under the form
$$
p_1 = p_{1,\h} + p_{1,3} 
$$
with
$$
 \big \|[\partial_3p_{1,\h} ]_\e\big\|_{L^1(\R^+;  B^{0,\frac12})} +  \big \|[\nabla^\h p_{1,3} ]_\e\big\|_{L^1(\R^+;  B^{0,\frac12})}  \leq C_4 \, .
$$
\end{prop}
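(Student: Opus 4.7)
The plan is to derive elliptic equations for $p_0$ and $p_1$ by taking horizontal divergences in $(NS2D)_{y_3}$ and in $(T)$, exploit the divergence-free conditions $\mbox{div}_\h u^\h = 0$ and $\mbox{div}\, w = 0$, and then invert $\Delta_\h$ via horizontal Fourier multipliers. The key observation is that any multiplier of the form $\Delta_\h^{-1} \partial_i \partial_j$ with $i,j \in \{1,2\}$ has bounded symbol $\xi_i \xi_j/|\xi_\h|^2$ and is therefore bounded on $L^2$ (on the horizontally mean-free subspace, which is always where its argument lies, since the RHS is a sum of horizontal second derivatives); since this multiplier commutes with every vertical truncation $\Delta_q^\v$, it is also bounded on $B^{0,s}$ for every $s$. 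The pressure bounds then reduce to product estimates on $u^\h$ and $w$ already controlled by (\ref{rappel})--(\ref{rappelwh}).

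For $p_0$, applying $\mbox{div}_\h$ to $(NS2D)_{y_3}$ and using $\mbox{div}_\h u^\h=0$ yields $-\Delta_\h p_0 = \sum_{i,j=1}^{2} \partial_i \partial_j(u^{\h,i} u^{\h,j})$. Differentiating in $y_3$ and inverting $\Delta_\h$ gives $\partial_3 p_0 = 2\sum_{i,j=1}^{2} \Delta_\h^{-1} \partial_i \partial_j (u^{\h,i} \partial_3 u^{\h,j})$, so the task reduces to bounding $\|u^\h \, \partial_3 u^\h\|_{L^1(\R^+; B^{0,\frac12})}$. An anisotropic product estimate (Appendix) of the type $\|fg\|_{B^{0,\frac12}} \lesssim \|f\|_{\mathcal B^{1,\frac12}} \|g\|_{B^{0,\frac12}}$ combined with Cauchy--Schwarz in time controls this by $\|u^\h\|_{\tilde L^2(\mathcal B^{1,\frac12})} \|\partial_3 u^\h\|_{\tilde L^2(B^{0,\frac12})}$, both finite by (\ref{rappel}).

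For $p_1$, applying $\mbox{div}$ to $(T)$ together with $\mbox{div}\, w = 0$ and then $\mbox{div}_\h u^\h = 0$ gives
$$
-\Delta_\h p_1 \;=\; \mbox{div}(u^\h \cdot \nabla^\h w) \;=\; \sum_{k=1}^{3} \sum_{i=1}^{2} \partial_k \partial_i (u^{\h,i} w^k).
$$
Splitting this sum according to $k \in \{1,2\}$ versus $k=3$ motivates the decomposition $p_1 = p_{1,\h} + p_{1,3}$ with $-\Delta_\h p_{1,\h} = \sum_{k,i=1}^{2} \partial_k \partial_i(u^{\h,i} w^k)$ and $-\Delta_\h p_{1,3} = \partial_3 \mbox{div}_\h(u^\h w^3)$. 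For $p_{1,\h}$, the operator $\Delta_\h^{-1} \partial_k \partial_i$ is a bounded horizontal Riesz multiplier, hence $\|\partial_3 p_{1,\h}\|_{B^{0,\frac12}} \lesssim \|\partial_3 u^\h \, w^\h\|_{B^{0,\frac12}} + \|u^\h \, \partial_3 w^\h\|_{B^{0,\frac12}}$, and anisotropic product laws with (\ref{rappel})--(\ref{rappelwh}) close the $L^1(\R^+)$ bound. For $p_{1,3}$, the crucial point is that only $\nabla^\h p_{1,3}$ is demanded: since $\nabla^\h \Delta_\h^{-1} \mbox{div}_\h$ is a bounded horizontal Riesz multiplier, we obtain $\|\nabla^\h p_{1,3}\|_{B^{0,\frac12}} \lesssim \|\partial_3(u^\h w^3)\|_{B^{0,\frac12}}$, which closes by the same ingredients.

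Finally, the rescaling $[\cdot]_\e$ preserves $B^{0,\frac12}$ up to $\e$-independent constants (the identity $\widehat{[f]_\e}(\xi_\h,\xi_3) = \e^{-1} \widehat{f}(\xi_\h, \xi_3/\e)$ produces a scaling factor $\e^{s-\frac12}$ in $B^{0,s}$, equal to $1$ for $s=\frac12$), so the unrescaled estimates transfer directly. The main obstacle is the $p_{1,3}$ contribution: the vertical derivative $\partial_3$ sitting inside the elliptic equation prevents any direct bound on $\partial_3 p_{1,3}$, since $\Delta_\h^{-1} \partial_i \partial_3$ affords no vertical smoothing; the decomposition precisely isolates this difficult term into $p_{1,3}$, where only the horizontal gradient is requested and $\nabla^\h \Delta_\h^{-1} \partial_i$ is again a bounded Riesz-type operator.
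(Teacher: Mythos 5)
Your proposal is correct and follows essentially the same route as the paper: the same explicit formulas $p_0=\sum_{i,j\le 2}\partial_i\partial_j(-\Delta_\h)^{-1}(u^iu^j)$ and $p_1=\sum_{i\le 2,\,j\le 3}\partial_i\partial_j(-\Delta_\h)^{-1}(u^iw^j)$, the identical splitting of $p_1$ into the $j\in\{1,2\}$ part $p_{1,\h}$ and the $j=3$ part $p_{1,3}$ (so that only purely horizontal Riesz multipliers $\partial_i\partial_j(-\Delta_\h)^{-1}$, bounded on the mean-free subspace and commuting with $\Delta_q^\v$, ever act), and the same closure via the product laws of the Appendix together with the bounds \eqref{rappel}--\eqref{rappelwh} and the $\e$-invariance of the $B^{0,\frac12}$ norm under $[\cdot]_\e$. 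The only (immaterial) differences are that you derive the elliptic equations from ${\rm(NS2D)}_{y_3}$ and ${\rm(T)}$ rather than quoting them, and you invoke the product law $\|ab\|_{{\mathcal B}^{0,\frac12}}\lesssim\|a\|_{{\mathcal B}^{1,\frac12}}\|b\|_{{\mathcal B}^{0,\frac12}}$ in place of the symmetric law $\|ab\|_{L^1{\mathcal B}^{0,\frac12}}\lesssim\|a\|_{\tilde L^2{\mathcal B}^{\frac12,\frac12}}\|b\|_{\tilde L^2{\mathcal B}^{\frac12,\frac12}}$, which only affects which interpolate of the assumed norms the constants depend on.
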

\begin{proof}
By definition
$$
\partial_3p_0 = \partial_3 \sum_{i,j=1}^2 \partial_i \partial_j (-\Delta_\h)^{-1}(u^i u^j)\, .
$$
We recall the product law~(\ref{loiprod})
$$
\|ab\|_{L^1(\R^+; {\mathcal B}^{0,\frac12})} \lesssim \|a \|_{\tilde L^2(\R^+; {\mathcal B}^{\frac12,\frac12})}  \|b \|_{\tilde L^2(\R^+; {\mathcal B}^{\frac12,\frac12})} 
$$
as well as the fact recalled in~(\ref{injcont})
that~${\mathcal B}^{0,\frac12}$ embeds continuously in~$B^{0,\frac12}$. Since the operator~$ \partial_i \partial_j (-\Delta_\h)^{-1}$ is a Fourier multiplier of order 0 if~$i,j \in \{1,2\}$, it follows from~(\ref{rappel}) that
$$
\begin{aligned}
 \big \|[\partial_3p_0 ]_\e\big\|_{L^1(\R^+;  {\mathcal B}^{0,\frac12})}   &= \|\partial_3p_0\|_{L^1(\R^+; {\mathcal B}^{0,\frac12})} \\
 &\lesssim \|\partial_3 u^\h\|_{\tilde L^2(\R^+; {\mathcal B}^{\frac12,\frac12})} \| u^\h\|_{\tilde L^2(\R^+; {\mathcal B}^{\frac12,\frac12})}\\
& \leq C  \,,
\end{aligned}
$$
where~$C$ depends on~$\|u_0^\h\|_{{\mathcal B}^{-\frac12,\frac32}\cap {\mathcal B}^{-\frac12,\frac12}}$.
Furthermore by definition
$$
p_1 = \sum_{i =1} ^2 \sum_{ j=1}^3   \partial_i \partial_j (-\Delta_\h)^{-1}(u^i w^j) 
$$
and one sets
$$
 p_{1,\h} := \sum_{i,j =1} ^2  \partial_i \partial_j (-\Delta_\h)^{-1}(u^i w^j)  \quad  \mbox{and} \quad p_{1,3} 
:= \sum_{i =1} ^2   \partial_i \partial_3 (-\Delta_\h)^{-1}(u^i w^3) \, .
$$
The term~$[\partial_3p_{1,\h} ]_\e$ can be estimated exactly as~$[\partial_3p_0]_\e$ above thanks to~(\ref{rappelw}) and~(\ref{rappelwh}), and similarly for~$[\nabla^\h p_{1,3} ]_\e$ once noticed that for all~$j \in \{1,2\}$,
$$
\partial_j  p_{1,3} =\partial_3\sum_{i =1} ^2  \partial_i\partial_j(u^i w^3)
$$
and using again~(\ref{rappel})-(\ref{rappelwh}).  Proposition~\ref{propositionpression} is proved.
\end{proof}

%%%%%%%%%%%%%%%%%%%%%%%
\subsection{Proof of Lemma~\ref{lemdemforceext}}
recall that
$$  F^\e :=  \e  {\color{black}\Bigl[}\Bigl(w^{\rm h}  \cdot \nabla^{\rm h} (w^{\rm h} ,0 )+  w^3 \partial_3( w^{\rm h},0)\Bigr){\color{black}\Bigr]_\e}+     {\color{black}\Bigl[}\Bigl(w  \cdot\nabla  (u^\h,w^3) \Bigr){\color{black}\Bigr]_\e} +  \big (0,[\partial_3 (p_0 +  \e p_1)]_\e\big)  
  $$
and let us start with the pressure terms, which are estimated by   Proposition~\ref{propositionpression}.   One has indeed on the one hand $$
 \begin{aligned}
2^q  \!  \int_{t_k}^{t_{k+1}}  \! \! \big |  (\Delta_q^\v [\partial_3 (p_0 +  \e p_{1,\h})]_\e  |
 \Delta_q^\v R^\e)_{L^2}(t)  \big |  dt&\leq  C  \| R^\e\|  _{\tilde L^\infty_{k}  B^{0,\frac12}} 
 d_q  2^\frac q 2
\| \Delta_q^\v [\partial_3 (p_0 +  \e p_{1,\h})]_\e\|_{L^1(\R^+; L^2)} \\
 &\leq  Cs_q  \| R^\e\|  _{\tilde L^\infty_{k}  B^{0,\frac12}}  \big \|[\partial_3(p_0 +  \e p_{1,\h}) ]_\e\big\|_{L^1(\R^+;  B^{0,\frac12})}\\
&\leq  Cs_q  \| R^\e\|  _{\tilde L^\infty_{k}  B^{0,\frac12}}  
  \end{aligned}
$$
and on the other hand we notice that
$$
\e     \int_{t_k}^{t_{k+1}}\big |  (\Delta_q^\v (0, [\partial_3    p_{1,3}]_\e)  |
 \Delta_q^\v R^\e)_{L^2}(t) \big | \, dt  =   \int_{t_k}^{t_{k+1}}\big |  (\Delta_q^\v   \partial_3 [   p_{1,3}]_\e   |
 \Delta_q^\v R^{\e,3})_{L^2}\big |  \, dt\, .
$$
After an integration by parts we find therefore $$
\e    \int_{t_k}^{t_{k+1}}\big |  (\Delta_q^\v (0, [\partial_3    p_{1,3}]_\e)  |
 \Delta_q^\v R^\e)_{L^2}(t)\big |  \, dt  =      \int_{t_k}^{t_{k+1}}\big |  (\Delta_q^\v    [   p_{1,3}]_\e   |
 \partial_3\Delta_q^\v R^{\e,3})_{L^2}\big |  \, dt\, .
$$
Since~$R^\e$ is divergence free, another integration by parts gives 
$$
\e    \int_{t_k}^{t_{k+1}}\big |  (\Delta_q^\v (0, [\partial_3    p_{1,3}]_\e)  |
 \Delta_q^\v R^\e)_{L^2}(t) \big | \, dt  =   \int_{t_k}^{t_{k+1}}\big |  (\Delta_q^\v   \nabla^\h [   p_{1,3}]_\e   |
 \Delta_q^\v R^{\e,\h})_{L^2}\big |  \, dt
$$
and we conclude as above thanks to  Proposition~\ref{propositionpression}  that
$$
 \begin{aligned}
\e 2^q   \int_{t_k}^{t_{k+1}}\big |  (\Delta_q^\v (0, [\partial_3    p_{1,3}]_\e)  |
 \Delta_q^\v R^\e)_{L^2} (t)\big | \, dt& =2^q   \int_{t_k}^{t_{k+1}}\big |  (\Delta_q^\v    \nabla^\h [   p_{1,3}]_\e   |
 \Delta_q^\v R^{\e,\h})_{L^2} \big | \, dt \\
  &\leq  Cs_q  \| R^\e\|  _{\tilde L^\infty_{k}  B^{0,\frac12}}  \big \|  \nabla^\h [   p_{1,3}]_\e \big\|_{L^1(\R^+;  B^{0,\frac12})}\\
&\leq  Cs_q  \| R^\e\|  _{\tilde L^\infty_{k}  B^{0,\frac12}}  \, .
  \end{aligned}
$$
It remains to study the bilinear terms. Here again, the product laws recalled in~(\ref{loiprod}) give the result easily since for any function~$a$, thanks to the continuous embedding of~${\mathcal B}^{0,\frac12}$ into~$B^{0,\frac12}$, we have$$
 \begin{aligned}
\|w^{\rm h}  \cdot \nabla^{\rm h} a\|_{L^1(\R^+;  B^{0,\frac12})} &\leq \|w^{\rm h}  \cdot \nabla^{\rm h} a\|_{L^1(\R^+;  {\mathcal B}^{0,\frac12})}\\
& \lesssim  \|w^{\rm h} \|_{\tilde L^2(\R^+;  {\mathcal B}^{1,\frac12})}\|\nabla^\h a \|_{\tilde L^2(\R^+;  {\mathcal B}^{0,\frac12})}
 \end{aligned}
$$
and
$$
 \begin{aligned}
\|w^3   \partial_3 a\|_{L^1(\R^+;  B^{0,\frac12})}  &\leq \|w^3   \partial_3 a\|_{L^1(\R^+;  {\mathcal B}^{0,\frac12})}\\
& \lesssim  \|w^3 \|_{\tilde L^2(\R^+;  {\mathcal B}^{1,\frac12})}\|  a \|_{\tilde L^2(\R^+;  {\mathcal B}^{0,\frac32})}\, .
 \end{aligned}
$$
We conclude thanks to~(\ref{rappel})-(\ref{rappelwh}).   Lemma~\ref{lemdemforceext} is proved. \qed

%%%%%%%%%%%%%%%%%%%%%%%%%
\appendix

\section{Some   technical tools}

\subsection{Anisotropic Besov spaces} 
 In this Appendix we recall some useful properties on anisotropic Besov spaces, the definition of which is given in the introduction (see Definitions~\ref{debesoviso} and~\ref{deflpanisointro}). 
 
 \medskip
 
  Let us first recall the anisotropic Bernstein inequalities (see~\cite{cz, paicu}).
  
  \smallskip
  
  - If the support of the Fourier transform of a function~$a$ defined on~$\R$ is included in~$2^q \cB$ where~$\cB$ is a ball of~$\R$ then for all~$1\leq p_2\leq p_1\leq
\infty$,
 \beq\label{bernstein}
\|\partial_{x_3}^\alpha a\|_{L^{p_1}(\R)}
\lesssim 2^{q\left(|\al|+ \left(1/{p_2}-1/{p_1}\right)\right)}
\|a\|_{L^{p_2}(\R)}\,.
\eeq

  \smallskip

  - If the support of the Fourier transform of a function~$a$ defined on~$\R$ is included in~$2^q \cC$ where~$\cC$ is a ring of~$\R$ centered at~0 then
 \beq\label{inversebernstein}
\|a\|_{L^{p_1}(\R)} \lesssim
2^{-q} \|\partial_3 a\|_{L^{p_1}(\R)}\,.
\eeq

  \smallskip

  - If the support of the Fourier transform of a function~$a$  defined on~$\T^2$ is included in~$2^j \cB$ where~$\cB$ s a ball of~$\R^2$, then for all~$1\leq p_2\leq p_1\leq\infty$,
 \beq\label{bernstein2D}
\|a\|_{L^{p_1}(\R^2)} \lesssim
2^{2j  \left(1/{p_2}-1/{p_1}\right)} \|  a\|_{L^{p_2}(\R^2)}\,.
\eeq
 It is then not difficult to show, using~(\ref{bernstein2D}), that 
 \beq\label{estimate11}
\sum_{q\in \Z} 2^{qs} \|  \nabla^\h\Delta_{q  }^\v a\| _{  L^2}  + \sum_{q\in \Z} 2^{qs} \|  \Delta_{q  }^\v a\| _{    L^\infty_\h L^2_\v}  \lesssim \|a\|_{{\mathcal B}^{1,s}}\, .
\eeq
The spaces given by the following norm, introduced in~\cite{cheminlerner}, are used consistently in this text:
$$
\begin{aligned}
\|a\|_{\tilde L^r( [0;T];{\mathcal B}^{\sigma,s})} &:=\sum_{j,q} 2^{js + qs'}\|\Delta_j^{\rm h} \Delta_{q}^{\rm v} f\|_{ L^r( [0;T];L^2)} \\
\|a\|_{\tilde L^r( [0;T];{ B}^{0,s})}&:=\sum_{q} 2^{qs }\|\Delta_j^{\rm h} \Delta_{q}^{\rm v} f\|_{ L^r( [0;T];L^2)}\, .
\end{aligned}
$$
 Finally, let us present the paraproduct algorithm of J.-M. Bony~\cite{Bo81} (in the vertical direction): the product of two distributions~$a,b$, when defined, can decompose into
$$
ab =  S_{q -1}^\v a \, \Delta_{q  }^\v   b+ S_{q -1}^\v  b  \, \Delta_{q }^\v a+ \sum_{ q \sim   {q'}} 
 \Delta_{q}^\v   a \, \Delta_{q' }^\v b$$
and thus in particular \beq\label{paraproduit}
 \Delta_{q }^\v (ab) = \sum_{q'\sim q}  S_{q'-1}^\v a \, \Delta_{q' }^\v  b+ \sum_{q'\sim q } S_{q'-1}^\v  b  \, \Delta_{q' }^\v a+ \sumetage{2^{q'} \gtrsim 2^q} { q'' \sim   {q'}} 
 \Delta_{q'' }^\v   a \, \Delta_{q' }^\v b\, .
\eeq
This decomposition, with~(\ref{bernstein}), makes it possible to prove the following product laws (see for example~\cite{bcg}):
\begin{equation}\label{loiprod}
\begin{aligned}
\forall s \geq 1/2\, , \quad  \|ab\|_{ {\mathcal B}^{1,s}}& \lesssim \|a \|_{{\mathcal B}^{1,s}}  \|b \|_{ {\mathcal B}^{1,s}} \\
\|ab\|_{ {\mathcal B}^{0,s}}& \lesssim \|a \|_{{\mathcal B}^{\frac12,s}}  \|b \|_{ {\mathcal B}^{\frac12,s}} \\
\|ab\|_{ {\mathcal B}^{0,s} }&  \lesssim \|a \|_{{\mathcal B}^{1,s}}  \|b \|_{ {\mathcal B}^{0,s}} \, .
\end{aligned}
\end{equation}

\subsection{Some useful trilinear estimates}
We recall here for the convenience of the reader some estimates which were used in the course of the proofs.

\medskip

First from \cite[Section 4.1, Corollary 3]{paicu} we have  for any divergence free vector field~$u$  
\begin{equation}\label{estimRRappendix}
 \int_{a}^{t_{k+1}} \big | ( \Delta_q^\v (u\cdot \nabla u ) |
 \Delta_q^\v u)_{L^2}(t)
\big | 
 dt\lesssim 2^{-q}s_q \|\nabla^\h u\|_{\tilde L^2_{k} B^{0,\frac12}}^2 \| u\|_{\tilde L^\infty_{k} B^{0,\frac12}} 
\end{equation}
and from \cite[Lemma from Section 5.1]{paicu}, for all $v$ and for~$u$ divergence free
\begin{equation}\label{estimRuappendix}
\begin{aligned}
 & \int_{t_k}^{t_{k+1}}  \big | ( \Delta_q^\v (u\cdot \nabla v) |
 \Delta_q^\v R^\e)_{L^2}(t)
\big | 
 dt\lesssim   2^{-q}s_q \| v\|^\frac12 _{\tilde L^\infty_{k}  B^{0,\frac12}}\|\nabla^\h v\|_{\tilde L^2_{k}  B^{0,\frac12}} \\
 &\quad \times \Big(\|\nabla^\h v\|^\frac12_{\tilde L^2_{k}  B^{0,\frac12}}
  \| u\|^\frac12 _{\tilde L^\infty_{k}  B^{0,\frac12}}\|\nabla^\h u\|^\frac12_{\tilde L^2_{k}  B^{0,\frac12}} 
   +\|\nabla^\h u\| _{\tilde L^2_{k}  B^{0,\frac12}} 
 \| v\|^\frac12 _{\tilde L^\infty_{k}  B^{0,\frac12}}\Big)
\end{aligned}
\end{equation}
where~$(s_q)_{q \in \ZZ}$ is any sequence of positive real numbers satisfying
$$
\sum_{q \in \ZZ} s_q^\frac12 \leq 1 \, .
$$

\end{document}